\DeclareMathOperator{\dom}{dom}
\DeclareMathOperator{\ran}{ran}
\DeclareMathOperator{\id}{id}
\newtheorem{ut}{Theorem}
\numberwithin{ut}{section}
\newtheorem{up}[ut]{Proposition}
\newtheorem{ul}[ut]{Lemma}
\newtheorem{uc}[ut]{Corollary}
\newtheorem{ucl}[ut]{Claim}
\newtheorem{uq}{Question}
\theoremstyle{remark}
\newtheorem{ur}{Remark}
\theoremstyle{definition}
\begin{document}

\title[Cohesive almost zero-dimensional spaces]{On cohesive almost zero-dimensional spaces}

\subjclass[2010]{54F45, 54F50, 54D35, 54G20} 

\author{Jan J. Dijkstra}
\address{PO Box 1180, Crested Butte, CO 81224, United States of America}
\email{jan.dijkstra1@gmail.com}
\author{David S. Lipham}
\address{Department of Mathematics, Auburn University at Montgomery, Montgomery 
AL 36117, United States of America}
\email{dsl0003@auburn.edu, dlipham@aum.edu}

\begin{abstract}
We investigate C-sets in almost zero-dimensional spaces, showing that closed $\sigma$C-sets are C-sets. As corollaries, we  prove that every rim-$\sigma$-compact almost zero-dimensional space is zero-dimensional and that each cohesive almost zero-dimensional space is nowhere rational. To show these results  are sharp, we construct a rim-discrete connected set with an explosion point. We also show  every cohesive almost zero-dimensional subspace of $($Cantor set$)$$\times\mathbb R$ is nowhere dense. 
\end{abstract}

\maketitle

\section{Introduction}

All spaces under consideration are separable and metrizable.

A subset $A$ of a topological space $X$ is called a \textit{C-set in}  $X$ if $A$ can be written as an intersection of clopen subsets of $X$. A {\it$\sigma$C-set}\/ is a countable union of C-sets.
A space $X$ is said to be \textit{almost zero-dimensional} provided every point $x\in X$ has a neighborhood basis consisting of C-sets in $X$. 

A space $X$  is \textit{cohesive} if every point $x\in X$ has a neighborhood which contains no non-empty  clopen subset of $X$. Clearly every  cohesive space is nowhere zero-dimensional. The converse is false, even for   almost zero-dimensional spaces \cite{dij}. Spaces which are both almost zero-dimensional and cohesive include: \begin{align*}
 &\textit{Erd\H{o}s space }  &&\mathfrak E=\{x\in \ell^2:x_i\in\mathbb Q\text{ for each }i<\omega\}\text{ and}\\
 &\textit{complete Erd\H{o}s space } &&\mathfrak E_{\mathrm c}=\{x\in \ell^2:x_i\in \{0\}\cup \{1/n:n=1,2,3,...\}\text{ for each }i<\omega\},
 \end{align*}
 where $\ell^2$ stands for the Hilbert space of square summable sequences of real numbers. Other examples include the homeomorphism groups of the Sierpi\'nski  carpet and Menger universal curve \cite{ov2, dij4}, and various  endpoint sets in complex dynamics \cite{rem,lip}.

Almost zero-dimensionality of $\mathfrak E$ and $\mathfrak E_{\mathrm c}$ follows from the fact that each closed  $\varepsilon$-ball in either space is closed in the  zero-dimensional topology inherited from  $\mathbb Q^\omega$,  which is weaker than the $\ell^2$-norm topology.    The spaces are cohesive because  all non-empty  clopen subsets of $\mathfrak E$  and $\mathfrak E_{\mathrm c}$ are unbounded in the $\ell^2$-norm as proved by Erd\H os \cite{dim}.  Thus, if we add a point $\infty $ to
$\ell^2$ whose neighborhoods are the complements of bounded sets then we have that $\mathfrak E\cup \{\infty\}$ and $\mathfrak E_{\mathrm c}\cup \{\infty\}$ are connected.
The following result is Proposition 5.4 in Dijkstra and van Mill \cite{erd}.

\begin{up}\label{onepoint}Every almost zero-dimensional cohesive space has a one-point connectification. If a space has a one-point connectification then it is cohesive.\end{up}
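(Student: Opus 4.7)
The plan is to treat the two implications separately, handling the converse first since it is easier.

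\textbf{Converse.} Suppose $X^\ast = X \cup \{\infty\}$ is a one-point connectification, understood as a connected Hausdorff space in which $X$ is embedded as an open subspace. If $X$ fails to be cohesive at some $x \in X$, use Hausdorffness to pick disjoint open sets $V \ni x$ and $W \ni \infty$ in $X^\ast$. Then $V \subseteq X$, so $V$ is a neighborhood of $x$ in $X$, and by non-cohesion contains a non-empty clopen subset $U$ of $X$. Since $U \subseteq V \subseteq X^\ast \setminus W$ and $W$ is open, $\infty \notin \overline{U}^{X^\ast}$; combined with closedness of $U$ in $X$ this makes $U$ closed in $X^\ast$, while openness of $X$ in $X^\ast$ makes it open. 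Thus $U$ is a proper non-empty clopen subset of $X^\ast$, contradicting connectedness.

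\textbf{Forward direction: the setup.} I would build $X^\ast$ in the spirit of the $\mathfrak E$-construction recalled in the excerpt. Choose an ideal $\mathcal B$ of closed subsets of $X$ (closed under finite unions and closed subsets) and topologize $X^\ast = X \cup \{\infty\}$ by declaring the open sets to be the opens of $X$ together with sets of the form $\{\infty\} \cup (X \setminus B)$ for $B \in \mathcal B$. Two properties of $\mathcal B$ are what I need: first, $\{\mathrm{int}(B) : B \in \mathcal B\}$ covers $X$, which yields Hausdorffness and keeps $\infty$ a limit point of $X$; second, no non-empty clopen subset of $X$ is contained in any $B \in \mathcal B$, which forces connectedness, because any clopen $A \subseteq X^\ast$ with $\infty \notin A$ is automatically clopen in $X$ and must lie in some $B \in \mathcal B$ by closedness in $X^\ast$.

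\textbf{Building $\mathcal B$.} For each $x \in X$, use cohesion together with almost zero-dimensionality to pick a C-set neighborhood $N_x$ of $x$ containing no non-empty clopen subset of $X$. By Lindel\"ofness, select countably many $N_{x_n}$ whose interiors cover $X$, and let $\mathcal B$ consist of all closed subsets of $X$ contained in some finite union $N_{x_0} \cup \cdots \cup N_{x_k}$. The covering condition is immediate. The crux is the second property for these finite unions, which I would deduce from the following lemma, proved by induction on $k$: if $A$ is a C-set and $B$ a closed set, each containing no non-empty clopen subset of $X$, then the same holds for $A \cup B$. Indeed, writing $A = \bigcap_i C_i$ with $C_i$ clopen, for a non-empty clopen $U \subseteq A \cup B$ each set $U \setminus C_i$ is clopen and contained in $B$, hence empty, forcing $U \subseteq A$ and contradicting $A$'s property. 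Since finite unions of C-sets are C-sets (via $\bigcap_i C_i \cup \bigcap_j D_j = \bigcap_{i,j}(C_i \cup D_j)$), the induction runs.

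\textbf{Main obstacle.} The real work is not connectedness, which the scheme above handles cleanly, but ensuring $X^\ast$ is in fact a metrizable (and hence acceptable) connectification rather than merely Hausdorff and connected. This is likely best achieved by choosing a compatible metric on $X$ whose closed balls are C-sets --- obtainable from almost zero-dimensionality via a weaker zero-dimensional witness topology in which the balls are closed --- and taking $\mathcal B$ to be the bounded closed sets. The resulting $X^\ast$ then has neighborhoods of $\infty$ given by complements of bounded sets, exactly as in the $\mathfrak E$ and $\mathfrak E_{\mathrm c}$ examples in the excerpt, and metrizability follows from standard Alexandroff-style arguments.
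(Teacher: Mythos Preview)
The paper does not prove this proposition; it is quoted as Proposition~5.4 of Dijkstra and van Mill~\cite{erd}, with no argument given here. So I assess your proposal on its own terms.

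Your converse is correct. For the forward direction, your key lemma---if a C-set $A$ and a closed set $B$ each contain no non-empty clopen subset of $X$, then neither does $A\cup B$---is both correct and the right engine for connectedness. The gap is that your two constructions do not mesh. With the ideal generated by the finite unions $B_k=N_{x_0}\cup\dots\cup N_{x_k}$ you obtain a connected Hausdorff extension via the lemma, but regularity at $\infty$ (hence metrizability) would require $B_k\subset B_m^{\mathrm o}$ for some $m$, and nothing in your setup forces this since the $B_k$ are merely closed, not compact. Your proposed fix---replace $\mathcal B$ by the bounded closed sets in a metric whose closed balls are C-sets---does give metrizability, but now the connectedness argument is lost: cohesion guarantees only that each point has \emph{some} ball containing no non-empty clopen, not that \emph{every} bounded set does, and a bounded set need not lie in finitely many of the $N_{x_n}$'s, so your finite-union lemma no longer applies. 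You have two halves of a proof that do not yet fit together.

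One repair that stays inside your framework: keep the $B_k$'s for the no-clopen property, fix any compatible metric $d$ on $X$, and set $A_k=\{x\in X:d(x,X\setminus B_k^{\mathrm o})\ge 1/k\}$. Then each $A_k$ is closed, $A_k\subset A_{k+1}^{\mathrm o}$, $\bigcup_k A_k=X$, and $A_k\subset B_k$ inherits the no-clopen property; declaring $\{\infty\}\cup(X\setminus A_k)$ to be the basic neighborhoods of $\infty$ now yields a regular second-countable, hence metrizable, connected one-point extension.
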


\noindent Actually, open subsets of non-singleton connected spaces are cohesive, because cohesion is open hereditary \cite[Remark 5.2]{erd}. More information on cohesion and one-point connectifications can be found in \cite{ma}.

In Section 3 of this paper we will show that every cohesive almost zero-dimensional space $E$ is homeomorphic to a dense subset $E'\subset\mathfrak E_{\mathrm c}$ such that  $E'\cup \{\infty\}$ is connected. The result is largely  a consequence of earlier work by Dijkstra and  van Mill  \cite[Chapters 4 and 5]{erd}. We apply the embedding to show that every cohesive almost zero-dimensional subspace of $($Cantor set$)$$\times\mathbb R$ is nowhere dense, and there is a continuous one-to-one image of complete Erd\H{o}s space that is totally disconnected but not almost zero-dimensional. 

In Section 4 of the paper we examine C-sets and the rim-type of almost zero-dimensional spaces. We say  $X$ is \textit{rational at}  $x\in X$ if $x$ has a neighborhood basis of open sets with countable boundaries.   In  \cite[\S 6 Example p.596]{nis}, Nishiura and  Tymchatyn  implicitly proved that  $D^{\mathrm e}$, the set of endpoints of  Lelek's fan  \cite[\S 9]{lel},  is not rational at any of its points.  Results in \cite{bul,cha, 31} later established $D^{\mathrm e}\simeq \mathfrak E_{\mathrm c}$, so $\mathfrak E_{\mathrm c}$ is nowhere rational.   Working in $\ell^2$, Banakh \cite{ban1} recently demonstrated that each bounded open subset of  $\mathfrak E$  has an uncountable boundary. 
We generalize these results   by proving that each cohesive almost zero-dimensional space is nowhere rational. Moreover, every rim-$\sigma$-compact almost zero-dimensional space is zero-dimensional. 
We also find that in almost zero-dimensional spaces cohesion is preserved if we delete $\sigma$-compacta. These results follow from Theorem~\ref{big}, which states that closed  $\sigma$C-sets in almost zero-dimensional spaces are C-sets.
  
  In Section 5 we will construct a rim-discrete connected space $\tau$ with an explosion point.  The  example is partially based on \cite[Example 1]{lip2}, which was constructed by the second author to answer a question from the Houston Problem Book \cite{cookproblem}.  The pulverized complement of the explosion point will be a rim-discrete totally disconnected set which is not zero-dimensional, in contrast with Section 4 results.  Additionally, the rim-discrete property  guarantees the entire connected set has a rational compactification \cite{tym,ili,note}. We therefore solve \cite[Problem 79]{cookproblem} in the context of explosion point spaces. Results from Section 4 indicate that this new solution is optimal. 

In general,   ZD$\implies$AZD$\implies$TD$\implies$HD, where we used abbreviations for zero-dimensional, almost zero-dimensional, totally disconnected, and 
hereditarily disconnected.  
 In certain contexts, these implications can be reversed. For example,  \begin{center}HD$\stackrel{(1)\;}{\implies}$TD$\stackrel{(2)\;}{\implies}$AZD$\stackrel{(3)\;}{\implies}$ZD\end{center} for subsets of hereditarily locally connected continua   \cite[\S50 IV Theorem 9]{kur}. As mentioned above, the implication (3) is valid in the larger class of subsets of rational continua. But \cite[Example 1]{lip2} and the example $\tau$ in Section 5 show that (1) and  (2) are generally false in that context. 

\section{Preliminaries}

A space $X$ is \textit{hereditarily disconnected} if every connected subset of $X$ contains at most one point.  A space $X$ is \textit{totally disconnected} if every singleton in $X$ is  a C-set. A point $x$ in a connected space $X$ is:
\begin{itemize}
\item a  \textit{dispersion point} if $X\setminus \{x\}$ is hereditarily disconnected;
\item an \textit{explosion point} if $X\setminus \{x\}$ is totally disconnected.  
\end{itemize}

If P is a topological property then a space X is \textit{rim-P} provided $X$ has a basis  of open sets whose boundaries have the property P.   \textit{Rational} $\equiv$ rim-countable. \textit{Zero-dimensional} $\equiv$ rim-empty.  
 
Throughout the paper, $\mathfrak C$ will denote the middle-third Cantor set in $[0,1]$.  The coordinate projections in $\mathbb R ^2$ are denoted $\pi_0$ and $\pi_1$;  $\pi_0(\langle x,y\rangle)=x$ and $\pi_1(\langle x,y\rangle)=y$. We define $ \nabla:[0,1]^2\to [0,1]^2$  by  $\langle x,y\textstyle\rangle\mapsto\langle xy+\frac{1}{2}(1-y),y\rangle.$ The image of $\nabla$ is the region enclosed by  the triangle with vertices  $\langle 0,1\rangle$, $\langle \frac{1}{2},0\rangle$, and $\langle 1,1\rangle$.  Note that  $\nabla\restriction [0,1]\times(0,1]$ is a homeomorphism and   $\textstyle \nabla^{-1}(\langle \frac{1}{2},0\rangle)=[0,1]\times \{0\}.$   For each $X\subset \mathfrak C\times (0,1]$  we put  $$\textbf{\d{$\nabla$}}X=\textstyle \nabla(X)\cup\textstyle \{\langle \frac{1}{2},0\rangle\}.$$ The \textit{Cantor fan} is the set  $\nabla(\mathfrak C\times [0,1])=\textbf{\d{$\nabla$}}(\mathfrak C\times (0,1])$, see Figure 1.

Given $X\subset \mathfrak C$, a function  $\varphi:X\to [0,1]$  is     \textit{upper semi-continuous} (abbreviated \textit{USC}) if $\varphi^{-1}[0,t)$ is open in $X$ for every $t\in [0,1]$. Define 
\begin{align*}
G^\varphi_0&=\{\langle x,\varphi(x)\rangle:\varphi(x)>0\}; \text{ and }\\
L^\varphi_0&=\{\langle x,t\rangle:0\leq t\leq \varphi(x)\}.
\end{align*}
We say  $\varphi$ is a \textit{Lelek function} if $\varphi$ is USC and $G^\varphi_0$ is dense in $L^{\varphi}_{0}$. Lelek functions with domain $\mathfrak C$ exist, and if $\varphi$ is a Lelek function with domain $\mathfrak C$ then $\nabla L^\varphi_0$ is a \textit{Lelek fan}, see Figure 2. For example, let $\|\;\;\|$ be the $\ell^2$-norm and identify $\mathfrak C$ with the Cantor set $(\{0\}\cup \{1/n:n=1,2,3,...\})^\omega$. Define  
$\eta(x)=1/(1+\|x\|)$, where $1/\infty=0$. Then  $\mathfrak E_{\mathrm c}$ is homeomorphic to $G^\eta_0$,
$\eta:\mathfrak C\to [0,1]$ is a Lelek function,  and $\nabla L^\eta_0$ is a Lelek fan;  see  \cite{rob} and the proof of \cite[Theorem 3]{crit}.  
 
\begin{figure}[h]
	\begin{minipage}{0.49\textwidth}
       		 \centering   		 \includegraphics[height=40mm,width=49mm]{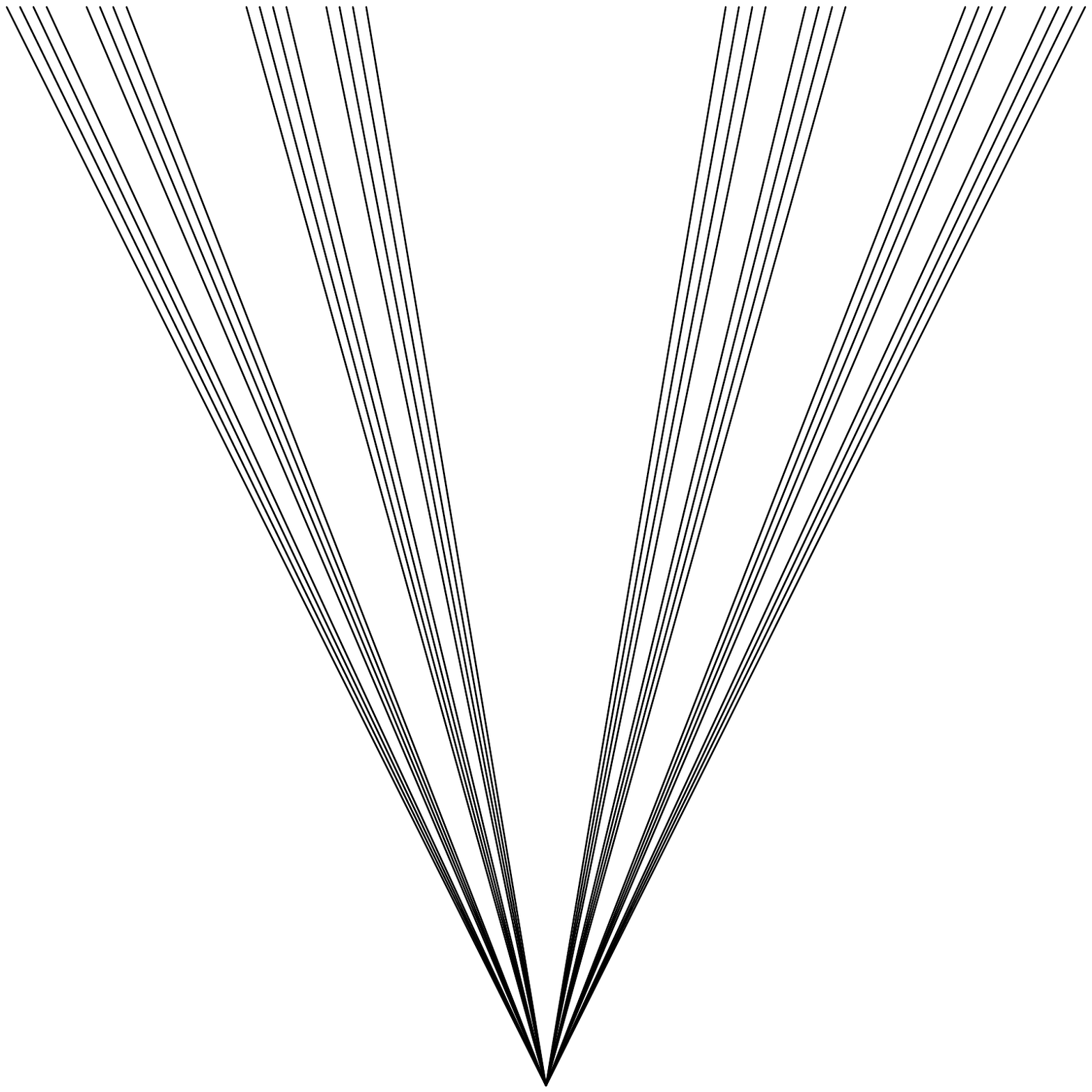}
     		 \caption[]{Cantor fan}
       		 \label{fig:prob1_6_1}
  	\end{minipage} 
	\begin{minipage}{0.49\textwidth}
    		\centering
      		\includegraphics[height=40mm,width=49mm]{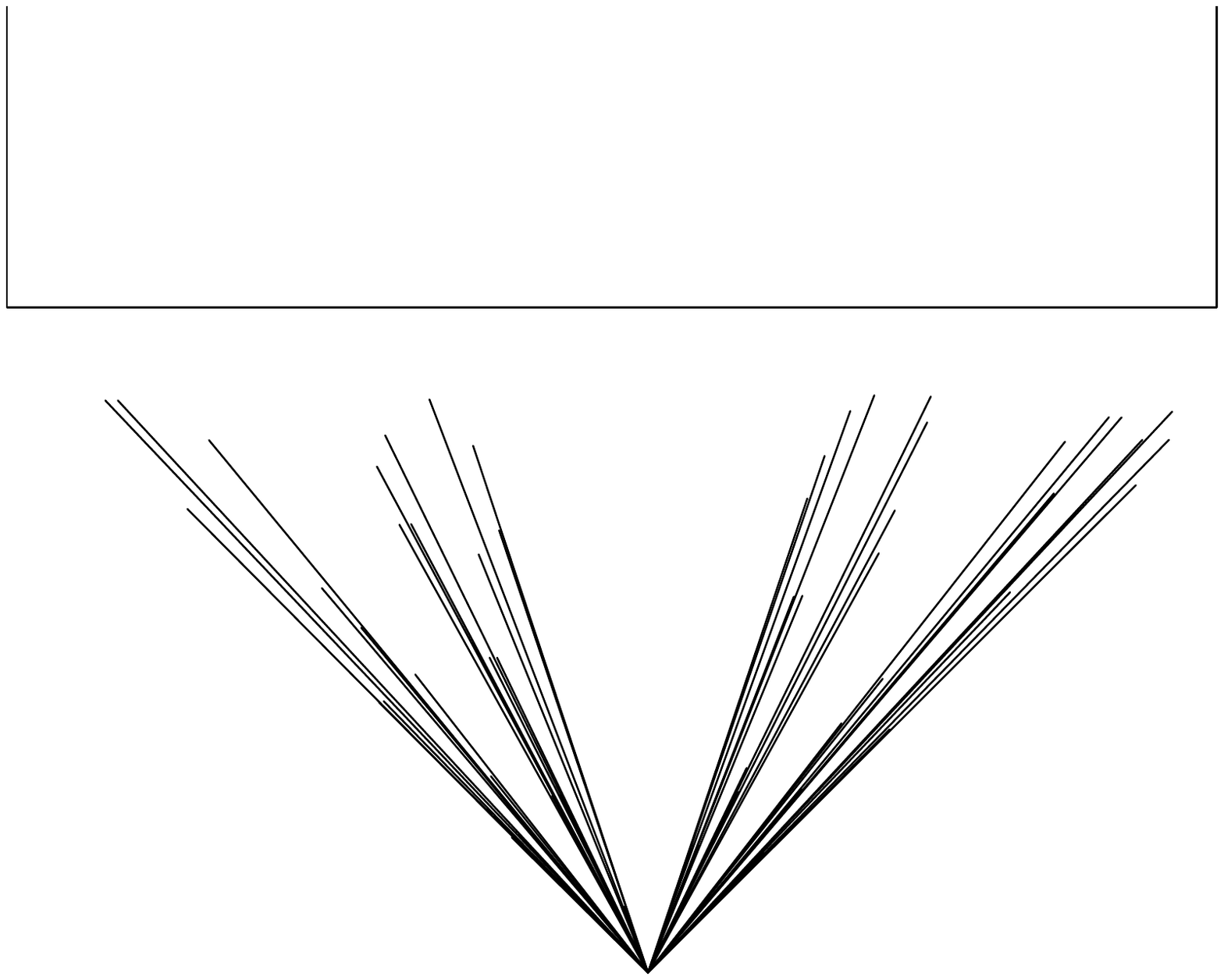}  
   		\caption{Lelek fan}
   	\end{minipage}
\end{figure}

\section{Embedding into fans and complete Erd\H{o}s space}

Let $E$  be any non-empty cohesive almost zero-dimensional space. Dijkstra and van Mill proved:  \textit{There is a Lelek function $\chi:X\to[0,1)$ such that $E$ is homeomorphic to $G^\chi_0$ and hence $E$ admits a dense embedding in $\mathfrak E_{\mathrm c}$}  \cite[Proposition 5.10]{erd}. 
We observe:

\begin{ut}\label{emb}For the  Lelek function $\chi$ constructed  in \cite{erd}, $\textnormal{\textbf{\d{$\nabla$}}} G^\chi_0$ is connected. Thus, there is a dense homeomorphic embedding $\alpha:E\hookrightarrow \mathfrak E_{\mathrm c}$ such that $\alpha(E)\cup \{\infty\}$ is connected.  
\end{ut}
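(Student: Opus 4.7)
The plan is to prove the connectedness of $\textbf{\d{$\nabla$}} G^\chi_0$ first; the dense embedding assertion then follows from uniqueness of the Lelek fan. Let $\phi : E \to G^\chi_0$ denote the homeomorphism from \cite[Proposition 5.10]{erd}, and let $E^+ = E \cup \{\infty\}$ be the one-point connectification provided by Proposition \ref{onepoint}. My strategy is to realize $\textbf{\d{$\nabla$}} G^\chi_0$ as a continuous image of the connected space $E^+$. Define $\iota : E^+ \to \textbf{\d{$\nabla$}} G^\chi_0$ by $\iota(e) = \nabla(\phi(e))$ for $e \in E$ and $\iota(\infty) = v$. Since $\nabla$ restricts to a homeomorphism on $\mathfrak{C} \times (0,1]$ and $G^\chi_0 \subseteq \mathfrak{C} \times (0,1]$, the map $\iota \restriction E$ is a continuous bijection onto $\nabla G^\chi_0$. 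The crux is continuity at $\infty$: for each $\varepsilon > 0$, the preimage $\{\infty\} \cup \phi^{-1}(\chi^{-1}[0,\varepsilon))$ of the basic cone-neighborhood of $v$ in $\textbf{\d{$\nabla$}} G^\chi_0$ must be a neighborhood of $\infty$ in $E^+$. I expect to verify this from the specific construction of $\chi$ in \cite{erd}, where the Lelek function is designed so that its small-value level sets correspond to \emph{tails at infinity} of the chosen connectification of $E$. Granting continuity, $\textbf{\d{$\nabla$}} G^\chi_0 = \iota(E^+)$ is connected.

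For the embedding, I would extend $\chi$ to a Lelek function $\hat\chi : \mathfrak{C} \to [0,1]$ so that $F := \nabla L^{\hat\chi}_0$ is a Lelek fan. By the uniqueness of the Lelek fan, there is a homeomorphism $h : F \to \nabla L^\eta_0$ that carries the vertex to the vertex. The composition $\alpha := h \circ \nabla \circ \phi : E \to \nabla G^\eta_0 \cong \mathfrak{E}_{\mathrm c}$ is then a homeomorphic embedding with dense image, the density following from the Lelek property of $\chi$ (which places $G^\chi_0$ densely in $L^{\hat\chi}_0$). Using the formula $\eta(x) = 1/(1 + \|x\|)$, cone-neighborhoods of $v$ in the standard Lelek fan correspond to complements of norm-bounded sets in $\ell^2$, so the identification $\mathfrak{E}_{\mathrm c} \cup \{\infty\} \leftrightarrow \nabla G^\eta_0 \cup \{v\}$ is a homeomorphism. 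Consequently the connectedness of $\textbf{\d{$\nabla$}} G^\chi_0$ transfers to connectedness of $\alpha(E) \cup \{\infty\}$.

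The principal obstacle is the continuity of $\iota$ at $\infty$: this is not a new topological argument but a compatibility check between the construction of $\chi$ in \cite[Proposition 5.10]{erd} and the topology on the one-point connectification from Proposition \ref{onepoint}. I anticipate that $\chi$ is built precisely so as to make this check automatic, which is the point of the phrase ``for the Lelek function $\chi$ constructed in \cite{erd}'' in the statement; if the check fails for the literal construction, a minor modification of the cone-neighborhood base at $v$ (e.g., by composing $\chi$ with an increasing homeomorphism of $[0,1)$) should restore it without altering the fan structure.
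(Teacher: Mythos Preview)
Your treatment of the embedding $\alpha$ matches the paper's almost verbatim: extend $\chi$ to a Lelek function on $\mathfrak C$, invoke uniqueness of the Lelek fan, and transport through the homeomorphism $\textbf{\d{$\nabla$}}G^\eta_0\simeq\mathfrak E_{\mathrm c}\cup\{\infty\}$. The divergence is in the connectedness argument. The paper does \emph{not} invoke Proposition~\ref{onepoint}; instead it opens up the construction of $\chi$ from \cite{erd}. There one has $\chi\le\varphi-\psi$, where $\psi(x)=\lim_{\varepsilon\to 0^+}\inf J_\varepsilon(x)$ records how far down in the second coordinate one must go before neighborhoods of $\langle x,\varphi(x)\rangle$ stop containing nonempty clopen subsets of $G^\varphi_0$. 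Given a nonempty clopen $A\subset G^\chi_0$, put $y=\inf\{\varphi(x):x\in\pi_0(A)\}$; for any $\varepsilon>0$ choose $x\in\pi_0(A)$ with $\varphi(x)<y+\varepsilon$, note that clopenness of $\{\langle x,\varphi(x)\rangle:x\in\pi_0(A)\}$ in $G^\varphi_0$ forces $\psi(x)\ge y$, and conclude $\chi(x)\le\varphi(x)-\psi(x)<\varepsilon$. Thus $0\in\overline{\pi_1(A)}$, and $\textbf{\d{$\nabla$}}G^\chi_0$ is connected.

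Your route has a real gap precisely where you flag it. Continuity of $\iota$ at $\infty$ is not a bookkeeping check: one-point connectifications of a cohesive almost zero-dimensional space are highly non-unique, and for a \emph{given} $E^+$ there is no a priori reason the sets $\phi^{-1}(\chi^{-1}[\varepsilon,1))$ are closed away from $\infty$. To verify it you would have to unpack both the specific connectification produced in \cite{erd} and the definition of $\chi$---at which point the three-line computation above is already in hand and strictly shorter. Your proposed fallback does nothing: composing $\chi$ with an increasing self-homeomorphism $h$ of $[0,1)$ leaves the neighborhood filter at the vertex unchanged, since $\chi^{-1}[0,\varepsilon)=(h\circ\chi)^{-1}[0,h(\varepsilon))$, so if continuity at $\infty$ fails before the modification it still fails after. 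There is also a circularity risk: if the proof of \cite[Proposition~5.4]{erd} itself passes through the Lelek representation (which is consistent with the chapter structure you are citing), then invoking Proposition~\ref{onepoint} to establish connectedness of $\textbf{\d{$\nabla$}}G^\chi_0$ assumes the conclusion.
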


\begin{proof} In \cite{erd}, $\chi$ is constructed via  two USC functions $\varphi$ and $\psi$ which have the same zero-dimensional domain $X$.  First, $\varphi$ is given by   \cite[Lemma 4.11]{erd} such that
$E$ is homeomorphic to $G^\varphi_0$.  And then,  in the proof of \cite[Lemma 5.8]{erd}, $\psi$ is defined by $$\psi(x)=\lim\limits_{\varepsilon\to 0^+}\inf J_\varepsilon (x)\text{, where}$$ 
\begin{align*}
U_\varepsilon(x)&=\{y\in X:d(x,y)<\varepsilon)\text{; and }\\
J_\varepsilon(x)&=\{t\in [0,1):U_\varepsilon(x)\times (t,1)\cap G^\varphi_0\text{ contains no non-empty clopen subset of }G^\varphi_0\}.\end{align*}
Notice  that $J_\varepsilon(x)$ becomes larger as $\varepsilon$ decreases,  so its infimum decreases.  Thus $\psi(x)$ is well-defined.  Finally,   $\chi$  is defined so that $\langle x,\varphi(x)\rangle \mapsto \langle x,\chi(x)\rangle$  is a homeomorphism and   $\chi\leq \varphi-\psi$ \cite[Lemma 4.9]{erd}.
 
To prove that $\textnormal{\textbf{\d{$\nabla$}}} G^\chi_0$ is connected, we let $A$ be any non-empty clopen subset of $G^\chi_0$ and show $0\in \overline{\pi_1(A)}$.  Define $y=\inf\{\varphi(x): x\in \pi_0(A)\}$ and let $\varepsilon>0$. Pick an $x\in \pi_0(A)$ with $\varphi(x)<y+\varepsilon$. 
Since $\{\langle x,\varphi(x)\rangle:x\in \pi_0(A)\}$ is a clopen subset of $G^\varphi_0$ and $X$ is zero-dimensional, $\psi(x)\geq y$. We have $\langle x,\chi(x)\rangle\in A$ and 
$\pi_1(\langle x,\chi(x)\rangle)=\chi(x)\leq  \varphi(x)-\psi(x)<(y+\varepsilon)-y=\varepsilon.$ 
Since $\varepsilon$ was an arbitrary positive number, this shows that $0\in \overline{\pi_1(A)}$.

We will now construct $\alpha$.  Since $\chi$ is Lelek, $X$ is perfect, so we may assume  $X$ is dense in $\mathfrak C$. Now $\chi$ extends to a Lelek function $ \overline{\chi}:\mathfrak C\to [0,1]$ such that $G^\chi_0$ is dense in $G^{\overline{\chi}}_0$ \cite[Lemma 4.8]{erd}.  In particular,  $\nabla L^{\overline{\chi}}_0$  is a Lelek fan.  By \cite{bul,cha} the Lelek fan is unique,  so there is a homeomorphism $\Xi:\nabla L^{\overline{\chi}}_0\to \nabla L^\eta_0$ (recall   $\eta$ from Section 2).  We observe that $\Xi(\textstyle\textbf{\d{$\nabla$}}G^{\overline{\chi}}_0)=\textstyle\textbf{\d{$\nabla$}}G^\eta_0\simeq \mathfrak E_{\mathrm c}\cup \{\infty\}$. So there is a homeomorphism $\gamma:\textstyle\textbf{\d{$\nabla$}}G^{\overline{\chi}}_0\to \mathfrak E_{\mathrm c}\cup \{\infty\}$.  We know there is also a homeomorphism $\beta:E\to \textstyle\textbf{{$\nabla$}}G^\chi_0$. Let $\alpha=\gamma\circ\beta$. Notice that $\alpha(E)\cup \{\infty\}=\gamma(\textstyle\textbf{\d{$\nabla$}}G^{{\chi}}_0)$ is connected.
\end{proof}

\begin{uc}\label{som}If $Y$ is a complete space containing $E$, then there is a complete cohesive almost zero-dimensional space $E'$ such that $E\subset E'\subset Y$.\end{uc}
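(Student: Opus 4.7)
The plan is to combine Theorem~\ref{emb} with the classical Lavrentiev extension theorem. First, I would apply Theorem~\ref{emb} to obtain a homeomorphic embedding $\alpha\colon E\to\mathfrak{E}_{\mathrm c}$ with $\alpha(E)$ dense in $\mathfrak{E}_{\mathrm c}$ and $\alpha(E)\cup\{\infty\}$ connected. Regarding $E\subset Y$ and $\alpha(E)\subset\mathfrak{E}_{\mathrm c}$ as subsets of the two complete spaces $Y$ and $\mathfrak{E}_{\mathrm c}$, Lavrentiev's theorem extends $\alpha$ to a homeomorphism $\widetilde{\alpha}\colon E'\to F$ in which $E\subset E'\subset Y$ is $G_\delta$ in $Y$ and $\alpha(E)\subset F\subset\mathfrak{E}_{\mathrm c}$ is $G_\delta$ in $\mathfrak{E}_{\mathrm c}$. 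Then $E'$ is completely metrizable, being a $G_\delta$ in the complete space $Y$; and $E'$ is almost zero-dimensional via $\widetilde{\alpha}$, since this property is inherited from $\mathfrak{E}_{\mathrm c}$ to the subspace $F$.

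The remaining step is to check that $E'$ is cohesive, and my strategy is to show that $F\cup\{\infty\}$ is connected and then apply the second half of Proposition~\ref{onepoint}. Because $\alpha(E)$ is dense in $\mathfrak{E}_{\mathrm c}$, it is dense in the intermediate set $F$, and therefore the connected set $\alpha(E)\cup\{\infty\}$ is dense in $F\cup\{\infty\}$ within the ambient space $\mathfrak{E}_{\mathrm c}\cup\{\infty\}$. A set trapped between a connected set and its closure is itself connected, so $F\cup\{\infty\}$ is connected. Since a non-singleton almost zero-dimensional space is disconnected, $F\cup\{\infty\}$ qualifies as a genuine one-point connectification of $F$; Proposition~\ref{onepoint} then gives cohesion of $F$, which $\widetilde{\alpha}$ transfers to $E'$.

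The most delicate point is transporting the connectedness of the one-point extension from $\alpha(E)$ to the Lavrentiev extension $F$, but the density built into Theorem~\ref{emb} makes this essentially automatic: any $G_\delta$ between $\alpha(E)$ and $\mathfrak{E}_{\mathrm c}$ is still dense in $\mathfrak{E}_{\mathrm c}$. In this sense no serious obstacle arises; the corollary is a clean packaging of Theorem~\ref{emb} with Lavrentiev's theorem.
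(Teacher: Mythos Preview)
Your proof is correct and follows the same approach as the paper: apply Theorem~\ref{emb}, extend $\alpha$ via Lavrentiev's theorem to a homeomorphism between $G_\delta$-sets, and use density of $\alpha(E)$ in $\mathfrak{E}_{\mathrm c}$ to conclude that the target-side extension still has a connected one-point extension by $\infty$. The paper's version is terser---it omits the explicit checks of completeness and almost zero-dimensionality and does not name Proposition~\ref{onepoint}---but the logic is identical.
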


\begin{proof}Let $\alpha:E\hookrightarrow  \mathfrak E_{\mathrm c}$ be given by Theorem \ref{emb}.   Since $Y$ and $\mathfrak E_{\mathrm c}$ are both complete,   Lavrentiev's Theorem \cite[Theorem 4.3.21]{eng} says $\alpha$ extends  to a  homeomorphism  between $G_\delta$-sets $E'$ and $A$ such that   $E\subset E'\subset Y$ and $\alpha(E)\subset A\subset \mathfrak E_{\mathrm c}$.  Since $\alpha(E)$ is dense in $\mathfrak E_{\mathrm c}$ and $\alpha(E)\cup \{\infty\}$ is connected, $A\cup \{\infty\}$ is  connected. So $E'$ is cohesive. \end{proof}



\begin{ut}\label{nw}Every cohesive almost zero-dimensional subset of  $\mathfrak C\times\mathbb R$ is nowhere dense.\end{ut}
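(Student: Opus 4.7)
The strategy is to argue by contradiction. Suppose $E\subset\mathfrak C\times\mathbb R$ is cohesive and almost zero-dimensional and not nowhere dense. Since AZD passes to subspaces and cohesion is open hereditary, after restricting $E$ to its intersection with a basic open set $U_0\times(a_0,b_0)$ in which it is dense, and using $U_0\times(a_0,b_0)\cong\mathfrak C\times\mathbb R$, I may assume that $E$ is dense in $\mathfrak C\times\mathbb R$. Corollary~\ref{som} applied with $Y=\mathfrak C\times\mathbb R$ then lets me enlarge $E$ to a complete cohesive AZD subspace of $\mathfrak C\times\mathbb R$, so without loss of generality $E$ is Polish, i.e., a dense $G_\delta$ in $\mathfrak C\times\mathbb R$.

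Let $\pi_1\colon\mathfrak C\times\mathbb R\to\mathbb R$ denote the second projection. Suppose first that $\mathbb R\setminus\pi_1(E)$ is dense in some open interval $I\subset\mathbb R$. Then for any $p=(x_0,y_0)\in E$ with $y_0\in I$ and any $\varepsilon>0$, I can pick $y_1,y_2\in I\setminus\pi_1(E)$ with $y_0-\varepsilon<y_1<y_0<y_2<y_0+\varepsilon$ and a clopen $U\ni x_0$ in $\mathfrak C$ of diameter less than $\varepsilon$. Then $V=E\cap(U\times(y_1,y_2))$ is an open neighborhood of $p$ whose boundary in $E$ lies in $E\cap(U\times\{y_1,y_2\})=\varnothing$, so $V$ is a nonempty clopen neighborhood of $p$ of arbitrarily small diameter. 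The open subspace $E\cap(\mathfrak C\times I)$ therefore admits a basis of clopens, making it zero-dimensional; being nonempty and inheriting cohesion from $E$, this is a contradiction.

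So $\pi_1(E)$ must contain some open interval $(c,d)$. Restricting $E$ to $\mathfrak C\times(c,d)\cong\mathfrak C\times\mathbb R$, I may assume $\pi_1(E)=\mathbb R$ and every slice $E_y=E\cap(\mathfrak C\times\{y\})$ is nonempty. In this main case the plan is to show that $E$ is rim-$\sigma$-compact, after which the Introduction's consequence of Theorem~\ref{big} (``every rim-$\sigma$-compact AZD space is zero-dimensional'') produces the final contradiction with cohesion as above. The candidate neighborhoods $V=E\cap(U\times(y_1,y_2))$ of $p=(x_0,y_0)$ have boundaries in $E$ contained in the compact zero-dimensional set $U\times\{y_1,y_2\}\subset\mathfrak C\times\mathbb R$; since compact subsets of an AZD space are C-sets (by the standard finite-cover argument using AZD at each outside point), any $\sigma$-compact closed subset of $E$ is a closed $\sigma$C-set, and is therefore a C-set by Theorem~\ref{big}. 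The hard part is to select $y_1,y_2$ so that the slices $E\cap(U\times\{y_i\})$ are in fact $\sigma$-compact in $E$: Polish subspaces of a compact metric space need not be $\sigma$-compact in general, so this step must exploit the specific cohesive-AZD structure of $E$---very likely via the Lelek-function representation from Theorem~\ref{emb} combined with a Baire/Kuratowski--Ulam-type argument applied to the $F_\sigma$ complement of $E$ in $\mathfrak C\times\mathbb R$.
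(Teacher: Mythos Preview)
Your reduction via Corollary~\ref{som} to a dense $G_\delta$ subspace $E\subset\mathfrak C\times\mathbb R$ is exactly what the paper does. The problem is in your Case~2: you yourself flag the ``hard part''---choosing $y_1,y_2$ so that the horizontal slices $E\cap(U\times\{y_i\})$ are $\sigma$-compact---and then do not carry it out. The closing sentence (``very likely via the Lelek-function representation \ldots\ combined with a Baire/Kuratowski--Ulam-type argument'') is a hope, not an argument. A $G_\delta$ subset of $\mathfrak C$ is just an arbitrary Polish zero-dimensional space and need not be $\sigma$-compact; nothing you have written isolates a property of the particular $E$ at hand that would force comeagerly many, or even densely many, horizontal slices to be $\sigma$-compact. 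Since the whole rim-$\sigma$-compact route hinges on this unproven selection, the proof as written has a genuine gap.

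The paper avoids this difficulty entirely by slicing in the \emph{other} direction. From the fact that $E$ is a dense $G_\delta$ in $\mathfrak C\times\mathbb R$, Kuratowski--Ulam gives a $c\in\mathfrak C$ for which $E\cap(\{c\}\times\mathbb R)$ is dense in the vertical line $\{c\}\times\mathbb R$. Fix $x=\langle c,r\rangle\in E$ and a basic open $V\times(a,b)\ni x$. Density of the vertical fiber lets you choose $r_1\in(a,r)$ and $r_2\in(r,b)$ with $x_i=\langle c,r_i\rangle\in E$. Now use only that $E$ is totally disconnected: pick $E$-clopen sets $W_i\ni x_i$ with $x\notin W_1\cup W_2$, and shrink to $\mathfrak C$-clopen $U_i\subset V$ so that $(U_i\times\{r_i\})\cap E\subset W_i$. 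Then
\[
\bigl[(U_1\cap U_2)\times[r_1,r_2]\setminus(W_1\cup W_2)\bigr]\cap E
\]
is an $E$-clopen neighborhood of $x$ inside $V\times(a,b)$, so $E$ is zero-dimensional at $x$, contradicting cohesion. No $\sigma$-compactness, no Theorem~\ref{big}, and no Lelek-function machinery are needed at this step; the clopens $W_i$ do the work that you were trying to get from $\sigma$-compact boundaries.
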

    
\begin{proof}Cohesion is open-hereditary \cite[Remark 5.2]{erd}. By  self-similarity of $\mathfrak C\times\mathbb R$,   it therefore suffices to show  there is no dense cohesive almost zero-dimensional subspace of $\mathfrak C\times\mathbb R$. Suppose on the contrary that $E$ is such a space.  By Corollary \ref{som}  there is a complete cohesive almost zero-dimensional $X\subset \mathfrak C\times\mathbb R$  such that $E\subset X$.  Then $X$ is a dense  $G_\delta$-subset of $\mathfrak C\times\mathbb R$, so by \cite{brouwer, KU} there exists $c\in \mathfrak C$ such that $\overline{X\cap(\{c\}\times \mathbb R)}=\{c\}\times\mathbb R$.  Let $x=\langle c,r\rangle\in X$. We obtain a contradiction by showing $X$ is zero-dimensional at $x$.    Let $V\times (a,b)$ be any regular open subset of  $\mathfrak C\times \mathbb R$ which contains $x$. There exist an $r_1\in (a,r)$ and an $r_2\in (r,b)$ such that $x_1=\langle c,r_1\rangle$ and $x_2= \langle c,r_2\rangle$ are in $ X$. Since $X$ is totally disconnected, there are $X$-clopen sets $W_1$ and $W_2$ such that $x_1\in W_1$, $x_2\in W_2$, and $x\notin W_1\cup W_2$. Let $U_1,U_2\subset V$ be $\mathfrak C$-clopen sets such that  $x_i\in (U_i\times \{r_i\})\cap X\subset W_i$ for each $i\in\{1,2\}$.  Then $[(U_1\cap U_2)\times[r_1,r_2]\setminus (W_1\cup W_2)]\cap X$ is an  $X$-clopen subset of $V\times (a,b)$ which contains $x$.  This shows that $X$ is zero-dimensional at $x$.  \end{proof}


Corollary \ref{nw} shows that a certain  continuous one-to-one  image of $\mathfrak E_{\mathrm c}$  is totally disconnected but not almost zero-dimensional.  Define $$f:\mathfrak E_{\mathrm c}\to (\{0\}\cup \{1/n:n=1,2,3,...\})^\omega\times [0,1]$$ by $\textstyle f(x)=\big\langle x,\frac{1+\sin\|x\|}{2}\big\rangle.$    Let $Y=f(\mathfrak E_{\mathrm c})$. 
Clearly,  $f$ is one-to-one and continuous, and $Y$ is totally disconnected. The example $Y$ is essentially the same as \cite[Example $X_2$]{lip0} and therefore by \cite[Propositions 3 and 5]{lip0} $Y$ is dense in $\mathfrak C\times[0,1]$ and $\nabla Y$ is connected.  Thus $Y$ is cohesive. By Theorem~\ref{nw} $Y$ is not almost zero-dimensional. 
Both this example and the space $\tau$ constructed in Section 5 show that Theorem~\ref{nw} does not extend to totally disconnected spaces.

\section{$\sigma$C-sets and rim-type}
\begin{ur}\label{rem}
If $x\in A\mathrm{o}\subset X$ with $\partial A$ a C-set in $X$ then there is a clopen set $C$ with $x\in C$ and $C\cap \partial A=\varnothing$ and hence $C\cap A\mathrm{o}=C\cap\overline A$ is also clopen. Consequently, rim-C is equivalent to zero-dimensional.
\end{ur}

\begin{ul}\label{se}For every two disjoint C-sets in a space, there is a clopen set containing one and missing the other.
\end{ul}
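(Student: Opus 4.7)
The plan is to exploit the hereditary Lindel\"of property of separable metric spaces to replace the defining clopen families of $A$ and $B$ by countable decreasing sequences, and then to exhibit the separating clopen explicitly as a countable union.

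First, using hereditary Lindel\"ofness (applied to the open covers of $X \setminus A$ and $X \setminus B$ by complements of clopens from the respective C-set representations), I would produce decreasing clopen sequences $(U_n)_{n \in \omega}$ and $(V_n)_{n \in \omega}$ with $A = \bigcap_n U_n$ and $B = \bigcap_n V_n$. My candidate separator is
\[
C := \bigcup_{n \in \omega} (U_n \setminus V_n).
\]
This $C$ is clearly open, satisfies $C \cap B = \varnothing$ (since each $U_n \setminus V_n$ is disjoint from $V_n \supset B$), and contains $A$: for every $a \in A$ the hypothesis $a \notin B = \bigcap_n V_n$ furnishes an $n$ with $a \notin V_n$, and then $a \in U_n \setminus V_n \subset C$.

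The key step --- and the only place I expect real work --- is showing that $C$ is also closed, and therefore clopen. I would check this via sequences (valid because $X$ is metrizable). Take $x_k \to x$ with $x_k \in U_{m_k} \setminus V_{m_k}$ for some indices $m_k \in \omega$, and split on the behavior of $(m_k)$. In the first case, $(m_k)$ admits a constant subsequence with some value $m$, and then $x$ lies in the closed set $U_m \setminus V_m \subset C$. In the second case, $(m_k)$ has a subsequence tending to $\infty$; then for every fixed $n$ we have $m_k \geq n$ eventually along that subsequence, so $x_k \in U_{m_k} \subset U_n$, and closedness of $U_n$ forces $x \in U_n$. Since this holds for all $n$, $x \in \bigcap_n U_n = A \subset C$. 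In either case $x \in C$, so $C$ is clopen. This dichotomy on $(m_k)$ is the crux; everything else reduces to routine verification.
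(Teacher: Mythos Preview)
Your proof is correct. The paper itself gives no argument beyond citing Engelking's \emph{Dimension Theory}, Lemma 1.2.6, and your construction $C=\bigcup_n(U_n\setminus V_n)$ together with the bounded/unbounded dichotomy on the indices $m_k$ is precisely the standard argument behind that lemma, so there is nothing to compare.
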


\begin{proof}Identical to the proof of \cite[Lemma 1.2.6]{eng1}. \end{proof}

\begin{ut}Let $A$ be a subset of an almost zero-dimensional space $X$. If there is a $\sigma$C-set  $B$ with $\partial A\subset B\subset \overline A$,  then $\overline A$ is a C-set.\label{lab}  \end{ut}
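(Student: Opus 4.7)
The plan is first to exhibit $\overline A$ as a countable union of C-sets, and then, for each $x\notin\overline A$, to assemble countably many Lemma~\ref{se}-clopens into a single separating clopen via an ``expanding obstacle'' inside $X\setminus\overline A$.

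For the structural reduction, the identity $\overline A=\operatorname{int}A\cup\partial A$ together with $\partial A\subset B\subset\overline A$ gives $\overline A=\operatorname{int}A\cup B$. In an AZD space, every open set is a $\sigma$C-set: cover it pointwise by C-set neighborhoods and extract a countable subcover by separability of $X$. Hence $\operatorname{int}A$, and therefore $\overline A$, is a $\sigma$C-set; write $\overline A=\bigcup_{n\geq 1}F_n$ with each $F_n$ a C-set. (Singletons are C-sets in an AZD space, being intersections of their C-set neighborhoods, and finite unions of C-sets are easily seen to be C-sets.)

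Now fix $x\notin\overline A$. Applying the same $\sigma$C observation to the open set $X\setminus\overline A$, there exist C-set neighborhoods $N_k\subset X\setminus\overline A$ whose interiors cover $X\setminus\overline A$. Set $K_m:=N_1\cup\cdots\cup N_m$, which is again a C-set, and note that every $y\in X\setminus\overline A$ lies in $\operatorname{int}(K_m)$ for all sufficiently large $m$. Since $K_n\cup\{x\}$ is a C-set disjoint from the C-set $F_n$, Lemma~\ref{se} supplies a clopen $C_n$ with $F_n\subset C_n$ and $C_n\cap(K_n\cup\{x\})=\varnothing$; in particular $x\notin C_n$ and $C_n\cap K_n=\varnothing$.

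The key step, and the expected main obstacle, is verifying that $C:=\bigcup_n C_n$ is closed (its openness being automatic). Suppose $y_k\to y$ with $y_k\in C_{n_k}$. If $(n_k)$ has a constant subsequence equal to some $N$, then $y$ lies in the closed set $C_N$. Otherwise $n_k\to\infty$, and either $y\in\overline A\subset\bigcup_jF_j\subset C$, or $y\in X\setminus\overline A$, so $y\in\operatorname{int}(K_m)$ for every $m\geq M(y)$; choosing $k$ large enough that $n_k\geq M(y)$ forces $y_k\in\operatorname{int}(K_{n_k})\subset K_{n_k}$, contradicting $C_{n_k}\cap K_{n_k}=\varnothing$. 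Thus $C$ is clopen with $\overline A\subset C$ and $x\notin C$, which exhibits $\overline A$ as an intersection of clopens and completes the proof. The expanding obstacle $K_n$ is precisely the device that prevents the countable union of clopens from accumulating outside $\overline A$.
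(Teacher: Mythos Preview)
Your proof is correct and rests on the same two ingredients as the paper's---the increasing C-set filtration of the open set $X\setminus\overline A$ and Lemma~\ref{se}---but you assemble them dually. The paper separates each $D_i$ (your $K_i$) from $B_i$ by a clopen $C_i$ with $D_i\subset C_i\subset X\setminus B_i$ and takes the \emph{intersection} $C=\bigcap_i C_i\setminus A^{\mathrm o}$ as a clopen neighbourhood of $x$ inside $X\setminus\overline A$; closedness is then trivial and openness follows because near any $y\in C$ the set coincides with the finite intersection $D_j^{\mathrm o}\cap\bigcap_{i<j}C_i$. You instead first observe that $\overline A=A^{\mathrm o}\cup B$ is itself a $\sigma$C-set, separate each piece $F_n$ from $K_n\cup\{x\}$, and take the \emph{union} $C=\bigcup_n C_n$ as a clopen containing $\overline A$ and missing $x$; now openness is trivial and closedness needs the expanding-obstacle argument. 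In effect you reduce Theorem~\ref{lab} to the special case that becomes Theorem~\ref{big} and prove that directly, reversing the paper's order of derivation. One small wording issue in your closedness step: having $n_k\ge M(y)$ alone does not place $y_k$ in $K_{n_k}$; you also need $k$ large enough that $y_k\in\operatorname{int}(K_{M(y)})$ by convergence, whence $y_k\in K_{M(y)}\subset K_{n_k}$ as desired.
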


\begin{proof}Suppose $B=\bigcup \{B_i:i<\omega\}$  where each $B_i$ is a C-set, and $\partial A\subset B\subset \overline A$. To prove $\overline A$ is a C-set, it suffices to show that for every  $x\in X\setminus \overline A$ there is an  $X$-clopen set $C$ such that $x\in C\subset X\setminus \overline A$. 

Let $x\in X\setminus\overline A$. By the Lindel\"{o}f property and almost zero-dimensionality,  it is possible to write the open set $X\setminus \overline A$ as the union of countably many C-sets in $X$ whose interiors cover $X\setminus \overline A$.  The property of being a C-set is closed under finite unions, so  there is an increasing sequence of C-sets $D_0\subset D_1\subset\dotsb$ with  $x\in D_0$ and 
$$\textstyle\bigcup \{D_i:i<\omega\}= \bigcup \{ D_i ^\mathrm{o}:i<\omega\}=X\setminus \overline A.$$  
By Lemma \ref{se}, for each $i<\omega$ there is an $X$-clopen set  $C_i$ such that $D_i\subset C_i\subset X\setminus B_i.$  Let $\textstyle C=\bigcap  \{C_i:i<\omega\}\setminus A^\mathrm{o}$. Clearly, $C$ is closed, $x\in C$, and $$C\subset X\setminus (A^\mathrm{o}\cup B) = X\setminus \overline A.$$   Further, if $y\in C$ then there exists $j<\omega$  such that $y\in D_j^\mathrm{o}$. The open set  $D_j^\mathrm{o}\cap \bigcap\{C_i:i<j\}$ witnesses that $y\in C^\mathrm{o}$.  This  shows $C$ is open and thus clopen.\end{proof}
\begin{ut}\label{big} In an almost zero-dimensional space, every closed $\sigma$C-set is a C-set.  \end{ut}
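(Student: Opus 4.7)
The plan is to derive Theorem~\ref{big} as an immediate corollary of Theorem~\ref{lab}. Given a closed $\sigma$C-set $F$ in an almost zero-dimensional space $X$, I would apply Theorem~\ref{lab} to the subset $A=F$, taking $B=F$ itself as the witnessing $\sigma$C-set.

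Since $F$ is closed, $\overline{A}=F$ and $\partial A\subset F$, so the sandwich $\partial A\subset B\subset \overline{A}$ required by Theorem~\ref{lab} holds trivially with $B=F$. The hypothesis that $B$ is a $\sigma$C-set is precisely what Theorem~\ref{big} assumes about $F$. Theorem~\ref{lab} then concludes that $\overline{A}=F$ is a C-set in $X$, which is exactly what we want.

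I do not foresee any obstacle. All the real work has already been absorbed into the proof of Theorem~\ref{lab}: the Lindel\"of-plus-almost-zero-dimensional argument producing an increasing sequence of C-sets $D_i$ whose interiors exhaust $X\setminus\overline{A}$, the application of Lemma~\ref{se} to separate each $D_i$ from $B_i$ by a clopen $C_i$, and the verification that $\bigcap_{i<\omega} C_i\setminus A^{\mathrm{o}}$ is clopen and contains any chosen point of $X\setminus\overline{A}$. Theorem~\ref{big} is simply the clean specialization of Theorem~\ref{lab} to the closed case; one might even prefer to state Theorem~\ref{big} first and regard Theorem~\ref{lab} as the more general workhorse behind it.
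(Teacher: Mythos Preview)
Your proposal is correct and matches the paper's own proof exactly: the paper's proof of Theorem~\ref{big} is the single sentence ``Given a closed $\sigma$C-set $A$, apply Theorem~\ref{lab} with $B=A$.'' Your additional commentary accurately summarizes the mechanics inside Theorem~\ref{lab}, but the deduction itself is precisely the one the authors intended.
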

\begin{proof}Given a closed $\sigma$C-set $A$, apply  Theorem \ref{lab} with $B=A$.\end{proof}
With Remark~\ref{rem} we get:
\begin{uc}\label{sig}Every rim-$\sigma$C almost zero-dimensional space is zero-dimensional.\end{uc}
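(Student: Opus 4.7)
The plan is to string together the two facts immediately preceding the corollary: Theorem~\ref{big} (in almost zero-dimensional spaces, closed $\sigma$C-sets are already C-sets) and Remark~\ref{rem} (rim-C $\equiv$ zero-dimensional). The strategy is simply to upgrade ``rim-$\sigma$C'' to ``rim-C'' and then invoke the remark.

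In more detail: Let $X$ be almost zero-dimensional and rim-$\sigma$C. Given a point $x\in X$ and an open neighborhood $U$ of $x$, by the hypothesis I can find an open set $A$ with $x\in A\subset U$ and $\partial A$ a $\sigma$C-set in $X$. The boundary $\partial A$ is closed in $X$, so Theorem~\ref{big} applies to yield that $\partial A$ is in fact a C-set. Thus $X$ is rim-C, and Remark~\ref{rem} concludes that $X$ is zero-dimensional.

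For completeness, I would unpack the application of Remark~\ref{rem} in this setting to produce a clopen neighborhood of $x$ inside $U$: since $\partial A$ is a C-set disjoint from $x$, there is a clopen $C\subset X$ with $x\in C$ and $C\cap \partial A=\varnothing$. Because $A$ is open, $A=A^\mathrm{o}$, so $C\cap A=C\cap A^\mathrm{o}=C\cap\overline{A}$ is both open and closed. Since $C\cap A\subset A\subset U$, this is the desired clopen neighborhood of $x$ in $U$, and $x$ was arbitrary.

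There is essentially no obstacle here; the only point requiring a moment of thought is that a topological boundary is automatically closed, which is what lets the ``closed $\sigma$C-set'' hypothesis of Theorem~\ref{big} be met from the ``rim-$\sigma$C'' hypothesis without any extra argument. Everything else is bookkeeping.
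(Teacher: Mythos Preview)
Your proof is correct and follows exactly the route the paper takes: the paper derives this corollary in one line by combining Theorem~\ref{big} with Remark~\ref{rem}, and your argument simply spells out those two steps (noting that boundaries are closed so Theorem~\ref{big} applies, then unpacking the remark).
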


Since compacta are C-sets in totally disconnected spaces we also have:
\begin{uc}\label{sigb}Every almost zero-dimensional space that is rim-$\sigma$-compact or rational is zero-dimensional.\end{uc}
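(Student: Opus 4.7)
The plan is to reduce each of the two hypotheses to rim-$\sigma$C and then invoke Corollary~\ref{sig}. The only ingredient I need, beyond the already-proved material, is that in a totally disconnected space every compact set, and every countable set, is a $\sigma$C-set.

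First I would observe that almost zero-dimensionality implies total disconnectedness: if $\{U_n : n<\omega\}$ is a C-set neighborhood basis at a point $x$, then $\{x\}=\bigcap_n U_n$ is itself a C-set in $X$, so every singleton is a C-set.

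Next I would prove the helper statement that in a totally disconnected space $X$, every compact subset $K$ is a C-set. Given $x\in X\setminus K$, for each $y\in K$ the singleton $\{y\}$ is a C-set disjoint from the C-set $\{x\}$, so Lemma~\ref{se} (or the definition of C-set applied to $\{y\}$) supplies an $X$-clopen set $C_y$ with $y\in C_y$ and $x\notin C_y$. Compactness of $K$ selects finitely many $C_{y_1},\dots,C_{y_k}$ whose union is a clopen set containing $K$ and missing $x$. Intersecting such clopen sets over all $x\notin K$ exhibits $K$ as an intersection of clopen sets. Two immediate consequences: every $\sigma$-compact subset of $X$ is a $\sigma$C-set, and every countable subset is a $\sigma$C-set (since each singleton is a C-set).

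Finally I would assemble the corollary. Suppose $X$ is almost zero-dimensional and either rim-$\sigma$-compact or rational. Then $X$ has a basis of open sets whose boundaries are $\sigma$-compact (respectively countable). Since $X$ is totally disconnected by the first step, each such boundary is a $\sigma$C-set by the previous step. Hence $X$ is rim-$\sigma$C, and Corollary~\ref{sig} yields zero-dimensionality. There is no real obstacle: the argument is essentially the single lemma that compacta are C-sets in totally disconnected spaces, which is exactly the parenthetical hint that precedes the corollary.
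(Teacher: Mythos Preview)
Your proposal is correct and follows essentially the same route as the paper: the paper's entire proof is the one-line remark ``Since compacta are C-sets in totally disconnected spaces we also have'' preceding the corollary, and you have simply unpacked that remark (together with the observation that almost zero-dimensional spaces are totally disconnected) before invoking Corollary~\ref{sig}.
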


A space is called {\it nowhere rim-$\sigma$C\/} ({\it nowhere rim-$\sigma$-compact\/} respectively {\it nowhere rational\/}) if no point has a neighborhood basis consisting of sets that have boundaries that are
$\sigma$C-sets ($\sigma$-compact respectively countable).  With Theorem~\ref{big} and Remark~\ref{rem} we also find:
\begin{uc}\label{c6}Cohesive almost zero-dimensional spaces are nowhere rim-$\sigma$C and hence nowhere rim-$\sigma$-compact and nowhere rational.  \end{uc}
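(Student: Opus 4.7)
My plan is to deduce this corollary directly from Theorem~\ref{big} and Remark~\ref{rem}, with regularity of the metric topology used to control closures. Fix an arbitrary point $x$ in a cohesive almost zero-dimensional space $X$; I aim to exhibit an open neighborhood $V$ of $x$ such that no neighborhood of $x$ contained in $V$ has a $\sigma$C-set boundary.

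First, invoke cohesion to pick an open neighborhood $U$ of $x$ containing no non-empty clopen subset of $X$, and then use regularity to shrink to an open $V\ni x$ with $\overline V\subset U$. Suppose for contradiction that $A\subset V$ is a neighborhood of $x$ with $\partial A$ a $\sigma$C-set. Since $\partial A$ is closed, Theorem~\ref{big} promotes it to a genuine C-set. Remark~\ref{rem}, applied with $x\in A^{\mathrm o}$, then yields a clopen $C\ni x$ such that $C\cap\overline A$ is clopen in $X$. But $C\cap\overline A$ is a non-empty clopen subset of $X$ lying inside $\overline A\subset\overline V\subset U$, contradicting the choice of $U$. Hence $X$ is nowhere rim-$\sigma$C.

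For the two remaining assertions, recall that every almost zero-dimensional space is totally disconnected, and in a totally disconnected space every compact set $K$ is a C-set: each $y\notin K$ can be separated from each point of $K$ by a clopen since singletons are C-sets, so compactness of $K$ produces a single clopen containing $K$ and missing $y$. Hence $\sigma$-compact sets are $\sigma$C-sets, and (since countable sets are $\sigma$-compact) rational sets are rim-$\sigma$-compact, so nowhere rim-$\sigma$C implies nowhere rim-$\sigma$-compact, which in turn implies nowhere rational. The only delicate point---the main obstacle, such as it is---is the need to pass from $A\subset U$ to $\overline A\subset U$, since the clopen set furnished by Remark~\ref{rem} lives in $\overline A$ rather than in $A$; this is exactly what the regularity step buys us, and without it the cohesion hypothesis would not be triggered.
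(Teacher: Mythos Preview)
Your proof is correct and follows the same route as the paper, which simply invokes Theorem~\ref{big} and Remark~\ref{rem} without further comment. Note, however, that the regularity step is unnecessary: Remark~\ref{rem} actually gives $C\cap A^{\mathrm o}=C\cap\overline A$, so the resulting non-empty clopen set already lies in $A^{\mathrm o}\subset A\subset U$, and you may take $V=U$ from the outset.
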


Thus there are no rim-$\sigma$-compact or rational connected spaces $Y$ with a point $p$ such that $Y\setminus\{p\}$ is almost zero-dimensional, using Proposition~\ref{onepoint}.

\begin{ut} \label{c7} If $X$ almost zero-dimensional,  $Y=X\cup\{p\}$ is connected, and $K\subset X$ is $\sigma$-compact,  then  $Y\setminus K$ is connected.  
\end{ut}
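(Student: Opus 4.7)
The plan is to argue by contradiction, leveraging that $X$ is cohesive (by Proposition~\ref{onepoint}, as $Y$ is a one-point connectification of $X$) and that $X$ is open in $Y$ (since $\{p\}$ is closed). Suppose $Y\setminus K=A\sqcup B$ is a disconnection with $p\in A$ and $B\ne\varnothing$. I would set $V_A:=Y\setminus\overline{B}^Y$ and $V_B:=Y\setminus\overline{A}^Y$; using that $A,B$ are closed in $Y\setminus K$, one checks these are open in $Y$ with $V_A\cap(Y\setminus K)=A$, $V_B\cap(Y\setminus K)=B$, $V_B\subset X$ (as $p\in A\subset \overline{A}^Y$), and crucially $V_A\cap V_B\subset K$.

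First I would dispatch the case $V_A\cap V_B\ne\varnothing$: this is a nonempty open subset of $X$ contained in the $\sigma$-compact $K$, and an open subset of a separable metric space is $F_\sigma$, hence the set itself is $\sigma$-compact. As an open subspace of $X$ it inherits almost zero-dimensionality and cohesion (the latter by \cite[Remark 5.2]{erd}); being $\sigma$-compact it is trivially rim-$\sigma$-compact, so Corollary~\ref{sigb} forces it zero-dimensional---contradicting cohesion. Thus one may assume $V_A\cap V_B=\varnothing$, whence $\overline{A}^Y\cup\overline{B}^Y=Y$ and $F:=\overline{A}^Y\cap\overline{B}^Y\subset K$ is a nonempty (by connectedness of $Y$) closed $\sigma$-compact subset of $X$. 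Closed subsets of $\sigma$-compact sets are $\sigma$-compact, hence $\sigma$C in the totally disconnected $X$, so by Theorem~\ref{big} the set $F$ is a C-set in $X$. Moreover $Y\setminus F=V_A\sqcup V_B$ is a new disconnection; since $\partial_X V_B\subset F$, applying Theorem~\ref{lab} to the open set $V_B$ shows that $\overline{V_B}^X=V_B\cup F$ is itself a C-set in $X$.

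The hard part will be closing out this refined situation. The plan is to write $\overline{V_B}^X=\bigcap_n C_n$ as a decreasing intersection of clopens of $X$; each $C_n$ is a nontrivial clopen of $X$, so $p\in\overline{C_n}^Y$ (else $C_n$ would be a nontrivial clopen of the connected $Y$). For any $v\in V_B$ and each $n$, the set $C_n\cap V_B$ is open in $X$ with $\partial_X(C_n\cap V_B)\subset F$, hence with $\sigma$-compact boundary. Combining these with the C-set neighborhood basis at $v$ furnished by almost zero-dimensionality, the aim is to exhibit a neighborhood basis of $v$ in $X$ all of whose members have $\sigma$-compact boundary---directly contradicting the conclusion of Corollary~\ref{c6} that $X$ is nowhere rim-$\sigma$-compact. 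The delicate point is extracting a genuine basis despite the fact that cohesion prohibits $v$ from having a clopen neighborhood basis; this will require carefully interleaving the clopens $C_n$ with the C-set nbhds of $v$ rather than using either family alone.
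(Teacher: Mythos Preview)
Your setup is sound, but you are making the ``hard part'' far harder than it needs to be. Once you know (in Case~2) that $V_B$ is a nonempty open subset of $X$ with $\partial_X V_B\subset F$---a closed $\sigma$-compact set, hence a C-set in $X$ by Theorem~\ref{big}---you are one line from the contradiction: by Remark~\ref{rem}, $V_B$ contains a nonempty $X$-clopen set $C$. Since $p\in A\subset\overline A$ and (in Case~2) $p\notin\overline B$ (else $p\in F\subset K\subset X$), we get $p\notin\overline{V_B}^{\,Y}$; hence $\overline C^{\,Y}\subset X$, so $C$ is clopen in $Y$, contradicting connectedness of $Y$. There is no need to pass to $\bigcap_n C_n$ or to manufacture a rim-$\sigma$-compact basis at $v$. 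In fact your sketched plan cannot work as stated: every $C_n\cap V_B$ contains all of $V_B$, so these sets never shrink to a neighborhood basis at $v$, and ``interleaving'' with C-set neighborhoods does not help since those have $\sigma$-compact boundary only when they are clopen. (A minor slip: $\overline{V_B}^{\,X}$ need not equal $V_B\cup F$, only be contained in it---but the containment is all you use.)

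The paper's proof follows essentially the same route but more directly: instead of your $V_A,V_B$ and the Case~1/Case~2 split, it uses normality of the open set $Y\setminus(\overline A\cap\overline B)$ to separate $\overline A\setminus\overline B$ and $\overline B\setminus\overline A$ by disjoint $Y$-open sets $U,V$ from the outset, so that $\partial U\subset K$ holds immediately and no Case~1 is needed. It then applies Theorem~\ref{big} and Remark~\ref{rem} exactly as above to extract a nonempty clopen subset of $U$, which is automatically $Y$-clopen since $p\in V$. Your Case~1 argument (a nonempty open $\sigma$-compact piece of a cohesive almost zero-dimensional space would be zero-dimensional by Corollary~\ref{sigb}, contradicting cohesion) is correct and pleasant, but the normality trick makes it unnecessary.
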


\begin{proof}Suppose $X$ almost zero-dimensional,  $Y$ is connected, and $K\subset X$ is $\sigma$-compact.  Striving for a contradiction suppose $Y\setminus K$ is not connected. Then $Y\setminus K$ is the union of two non-empty relatively closed subsets $A$ and $B$ such that $A\cap B=\varnothing$. We may assume that $p\in B$.  The closures of $A$ and $B$ in the open set $Y\setminus(\overline A\cap\overline B)$ are disjoint, so they are contained in disjoint $Y$-open sets $U$ and $V$. Note that  $\partial U$ in $Y$ is contained in $K$ and is therefore $\sigma$-compact and hence a $\sigma$C-set in the totally disconnected space $X$. By Theorem \ref{big} $\partial A$ is a C-set in $X$. So by Remark~\ref{rem} $U$ contains a nonempty clopen subset $C$ of $X$.  Note that $X$ is open in $Y$ and $U$ is contained in the $Y$-closed
set $Y\setminus B$ so $C$ is also clopen in $Y$. This violates the assumption that $Y$ is connected.  \end{proof}
Since $\mathfrak E\cup\{\infty\}$ and $\mathfrak E_{\mathrm c}\cup\{\infty\}$  are connected we have:
\begin{uc}Bounded neighborhoods in $\mathfrak E$ and $\mathfrak E_{\mathrm c}$  do not have  $\sigma$-compact boundaries.\end{uc}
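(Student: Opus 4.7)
The plan is to derive a contradiction directly from Theorem~\ref{c7}. Let $X$ denote either $\mathfrak{E}$ or $\mathfrak{E}_{\mathrm{c}}$, and let $Y=X\cup\{\infty\}$, which is connected by the discussion preceding Proposition~\ref{onepoint}. Suppose, for contradiction, that $U$ is a bounded neighborhood of some point $x\in X$ whose boundary $\partial U$ (computed in $X$) is $\sigma$-compact. Since $X$ is almost zero-dimensional and $Y$ is connected, Theorem~\ref{c7} applied with $K=\partial U$ tells us that $Y\setminus\partial U$ must be connected. The goal is to exhibit an explicit separation of $Y\setminus\partial U$ to contradict this.

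The natural candidate for the separation is
\[
Y\setminus\partial U \;=\; U^{\mathrm{o}} \;\sqcup\; \bigl((X\setminus\overline{U})\cup\{\infty\}\bigr),
\]
which is a disjoint decomposition at the set level. Next I would check that both pieces are nonempty and open in $Y$. Nonemptiness of $U^{\mathrm{o}}$ is immediate since $x\in U^{\mathrm{o}}$. Nonemptiness of $X\setminus\overline{U}$ follows from the fact that both $\mathfrak{E}$ and $\mathfrak{E}_{\mathrm{c}}$ are unbounded in the $\ell^2$-norm (e.g.\ $(n,0,0,\dots)\in\mathfrak{E}$ and $(\underbrace{1,1,\dots,1}_{n},0,0,\dots)\in\mathfrak{E}_{\mathrm{c}}$ have arbitrarily large norms), while $\overline{U}$ remains bounded.

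For openness, $U^{\mathrm{o}}$ is open in $X$ and $X$ is open in $Y$, so $U^{\mathrm{o}}$ is open in $Y$. For the other piece, $X\setminus\overline{U}$ is open in $X$ and hence in $Y$; at $\infty$, since $\overline{U}$ is a bounded subset of $\ell^2$, the set $\{\infty\}\cup(X\setminus\overline{U})$ is exactly the trace on $Y$ of a basic neighborhood of $\infty$ (the complement of a bounded set), so it is a $Y$-neighborhood of $\infty$. Thus $(X\setminus\overline{U})\cup\{\infty\}$ is open in $Y$. This yields the desired separation and contradicts the connectedness granted by Theorem~\ref{c7}.

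The main obstacle, if there is one, is just keeping straight which closures and boundaries live in which space, and verifying that the neighborhood of $\infty$ in $Y$ really is given by the complement of the bounded set $\overline{U}$; this relies on the fact that the closure of a bounded subset of $\ell^2$ is still bounded, together with the definition of the topology at $\infty$. Everything else is routine bookkeeping.
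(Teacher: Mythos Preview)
Your proof is correct and is exactly the argument the paper has in mind: it deduces the corollary from Theorem~\ref{c7} together with the connectedness of $\mathfrak E\cup\{\infty\}$ and $\mathfrak E_{\mathrm c}\cup\{\infty\}$, via the obvious separation of $Y\setminus\partial U$ into $U^{\mathrm o}$ and $(X\setminus\overline U)\cup\{\infty\}$. The only superfluous step is your verification that $X\setminus\overline U\neq\varnothing$; the second piece already contains $\infty$, so its nonemptiness is automatic.
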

 Combining Theorem~\ref{c7} with Proposition~\ref{onepoint} we find:
\begin{ut}If $X$ is cohesive and almost zero-dimensional and $K\subset X$ is $\sigma$-compact, then $X\setminus K$ is cohesive. \end{ut}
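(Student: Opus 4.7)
The plan is to prove this as an essentially immediate consequence of the two preceding results, \textbf{Proposition~\ref{onepoint}} and \textbf{Theorem~\ref{c7}}, as the transitional sentence ``Combining Theorem~\ref{c7} with Proposition~\ref{onepoint}'' already suggests. The strategy is to pass to a one-point connectification of $X$, delete $K$ there, and pull the resulting connectedness back to conclude cohesion of $X\setminus K$.

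First, since $X$ is almost zero-dimensional and cohesive, Proposition~\ref{onepoint} supplies a one-point connectification $Y=X\cup\{p\}$ of $X$, i.e., a connected separable metrizable space in which $X$ is embedded as a subspace and $p\notin X$. Next, applying Theorem~\ref{c7} to this $Y$ (with the same $\sigma$-compact $K\subset X$) yields that $Y\setminus K$ is connected. Since removing $K\subset X$ does not remove $p$, we have
\[
Y\setminus K \;=\; (X\setminus K)\cup\{p\},
\]
which exhibits $Y\setminus K$ as a connected space containing $X\setminus K$ as a subspace with a single extra point, i.e., as a one-point connectification of $X\setminus K$. Finally, the converse direction of Proposition~\ref{onepoint} says that any space admitting a one-point connectification is cohesive, so $X\setminus K$ is cohesive.

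There is not really a ``hard part''; the substance lies in Theorem~\ref{c7}, which is where the almost-zero-dimensionality hypothesis and the $\sigma$-compactness of $K$ are used (through the $\sigma$C-set machinery of Theorem~\ref{big}). The only things to verify carefully are routine bookkeeping: that $Y\setminus K$ inherits the subspace topology making $X\setminus K$ sit inside it properly, and that $p\notin X\setminus K$ so $Y\setminus K$ genuinely qualifies as a one-point connectification. Both are immediate from $K\subset X$ and $p\notin X$.
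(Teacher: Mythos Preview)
Your proposal is correct and matches the paper's own approach exactly: the paper states the theorem immediately after the sentence ``Combining Theorem~\ref{c7} with Proposition~\ref{onepoint} we find:'' and gives no separate proof, so the intended argument is precisely the one you spell out---pass to a one-point connectification $Y=X\cup\{p\}$ via Proposition~\ref{onepoint}, apply Theorem~\ref{c7} to obtain that $Y\setminus K=(X\setminus K)\cup\{p\}$ is connected, and invoke the converse direction of Proposition~\ref{onepoint}. The only implicit detail, that $X\setminus K$ is open in $Y\setminus K$, follows because singletons are closed in metrizable spaces, so $\{p\}$ is closed in $Y\setminus K$.
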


  
For the spaces $\mathfrak E$, $\mathfrak E_{\mathrm c}$, and $\mathfrak E_{\mathrm c} ^\omega$ there is a stronger result: in these spaces $\sigma$-compacta are negligible,
see \cite{negl}, \cite{31}, and \cite{stable}.

A connected space $X$ is \textit{$\sigma$-connected} if $X$ cannot be written as the union of $\omega$-many pairwise disjoint non-empty closed subsets.  Note that the Sierpi\'nski Theorem   \cite[Theorem 6.1.27]{eng} states that every continuum is $\sigma$-connected. Lelek \cite[P4]{lel0} 
asked whether every connected space with a dispersion point is $\sigma$-connected.
  Duda   \cite[Example 5]{dud}  
answered this question in the negative.

\begin{ut}If a space $X$ contains an open  almost zero-dimensional subspace $O$ with $O\not=\varnothing$ and $X\setminus O\not=\varnothing$ then $X$ is not $\sigma$-connected.\end{ut}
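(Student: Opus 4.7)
The plan is to exhibit a countable partition of $X$ into pairwise disjoint non-empty closed subsets. If $X$ is disconnected, the conclusion is immediate, so assume $X$ is connected.

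First, I would reduce to the case $X=O\cup\{p\}$ with $p$ a single point. Form the quotient $Y=X/(X\setminus O)$, which is still connected (collapsing a closed subspace preserves connectedness), $O$ sits openly in $Y$, and $Y\setminus O=\{p\}$. Since the quotient map is closed and continuous, any partition of $Y$ into $\omega$-many pairwise disjoint non-empty closed subsets lifts via preimages to such a partition of $X$. So it suffices to treat this one-point case; Proposition~\ref{onepoint} then gives that $O$ is cohesive.

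Second, I would use Theorem~\ref{emb} to identify $Y\simeq\textnormal{\textbf{\d{$\nabla$}}}G^\chi_0$ for a Lelek function $\chi:\mathfrak C\to[0,1)$, with $p$ corresponding to the apex. In this realization, closed subsets of $Y$ not containing $p$ correspond to subsets of $\nabla G^\chi_0$ bounded away from the apex in the second coordinate $v$. Since $\mathfrak{C}$ is zero-dimensional, fix a clopen partition $\mathfrak C=\bigsqcup_{n\ge 1}V_n$. For each $n$ I would pick a threshold $a_n>0$ with $a_n\downarrow 0$ and $\chi^{-1}(a_n)\cap V_n=\varnothing$, then set
\[
F_n=\nabla\bigl(\{(x,\chi(x)):x\in V_n,\ \chi(x)\ge a_n\}\bigr)\quad(n\ge 1),
\]
\[
F_0=\{p\}\cup\bigcup_{n\ge 1}\nabla\bigl(\{(x,\chi(x)):x\in V_n,\ \chi(x)<a_n\}\bigr).
\]
Using the upper semi-continuity of $\chi$, closedness of each $V_n$, and the choice $\chi^{-1}(a_n)\cap V_n=\varnothing$, a short convergence argument shows each $F_n$ is closed in $Y$: for $n\ge 1$ the set $F_n$ lies in the closed slab $\nabla(V_n\times[a_n,1])$; and the only potential limits of $F_0$
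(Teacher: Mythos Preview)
There is a genuine gap in the reduction step. Theorem~\ref{emb} gives you a homeomorphism $\beta:O\to\nabla G^\chi_0$, not a homeomorphism $Y\to\textnormal{\textbf{\d{$\nabla$}}}G^\chi_0$. The quotient $Y=X/(X\setminus O)$ is one one-point extension of $O$, and $\textnormal{\textbf{\d{$\nabla$}}}G^\chi_0$ is another; one-point connectifications are not unique, and nothing in Theorem~\ref{emb} says the given $\beta$ extends. In fact $Y$ need not even be first countable at $p$ (collapse a closed non-compact set and diagonalize over any putative countable base), so it need not be metrizable at all. Consequently your key sentence ``closed subsets of $Y$ not containing $p$ correspond to subsets of $\nabla G^\chi_0$ bounded away from the apex'' is unjustified: the sets $\beta^{-1}(F_n)$ for $n\ge1$ are certainly closed in $O$, but there is no reason they are closed in $X$, which is exactly what the partition demands after pulling back along the quotient map.

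The paper's argument bypasses any external model and works intrinsically with the C-set structure. Using regularity of $X$, one finds for each $x\in O$ a C-set $A_x$ in $O$ with $x\in A_x^{\mathrm o}$ and $A_x$ \emph{closed in $X$} (trap $A_x$ inside an $X$-closed neighbourhood contained in $O$). A countable increasing subcover $B_0\subset B_1\subset\cdots$ of such C-sets, together with disjoint $O$-clopen covers $\{C_{ij}\}_j$ of each $O\setminus B_i$, yields the countable disjoint closed cover $\{X\setminus O\}\cup\{C_{ij}\cap B_{i+1}\}$ of $X$; finiteness would force $O$ clopen, contradicting connectedness. The crucial difference is that closedness in $X$ is built in from the start, rather than imported from an auxiliary connectification whose topology at the added point is unrelated to that of $X$.
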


\begin{proof}We may assume that $X$ is connected. Since $O$ is almost zero-dimensional and open we can find for every $x\in O$ a C-set $A_x$ in $O$ that is closed in $X$
and with $x\in A_x\mathrm{o}$. Select a countable subcovering $\{B_i:i<\omega\}$ of $\{A_x:x\in O\}$. Since the union of two C-sets is a C-set we can arrange that 
$B_i\subset B_{i+1}$ for each $i<\omega$. Also we may assume that $B_0=\varnothing$. Since $B_i$ is a C-set in $O$ we can find an $O$-clopen covering $\mathcal C_i$
of $O\setminus B_i$. We  may assume that $\mathcal C_i=\{C_{ij}:j<\omega\}$ is countable. Moreover, by clopenness we can arrange that $\mathcal C_i$ is a disjoint collection. Consider the countable closed disjoint covering $$\mathcal F=(\{X\setminus O\}\cup \{C_{ij}\cap B_{i+1}:i,j<\omega\})\setminus\{\varnothing\}$$ of $X$. If $\mathcal F$ is finite then 
$O$ is closed and hence clopen, violating the connectedness of $X$. Thus $X$ is not $\sigma$-connected.
\end{proof}
Since every cohesive almost zero-dimensional space has a one-point connectification by  Proposition~\ref{onepoint}
 it produces an example in answer to Lelek's question. These examples are explosion point spaces rather than just dispersion point spaces. In particular,
 we have that $\mathfrak E\cup \{\infty\}$ and $\mathfrak E_{\mathrm c}\cup \{\infty\}$ are counterexamples. Note that  $\mathfrak E_{\mathrm c}\cup \{\infty\}$ is complete
 which is optimal because $\sigma$-compact dispersion point spaces cannot exist.

\section{A rim-discrete space with an explosion point} 

Let $\mathfrak C$, $\nabla$ and  \textbf{\d{$\nabla$}} be as defined in Section 2.  We will construct a function $\tau:P \to(0,1)$ with domain $P\subset \mathfrak C$ such that:

\begin{itemize}
\item[(1)] $\tau$ is a dense subset of $\mathfrak C\times(0,1)$;
\item[(2)] $\textbf{\d{$\nabla$}}\tau$ is connected; and
\item[(3)] $\textbf{\d{$\nabla$}}\tau$ is rim-discrete. 
\end{itemize}

\noindent Here we identify a function like $\tau$ with its graph (in the product topology).  Clearly $\tau$ will be totally disconnected.  Note that $\tau$ cannot be almost zero-dimensional by 
(2), (3), and Corollary~\ref{sigb}  or (1), (2), and Theorem \ref{nw}.

\subsection{Construction of $Z$}We first describe a rim-discrete connectible set $Z\subset \mathfrak C\times\mathbb R$ similar to $Y$ in \cite[Example 1]{lip2}.

Let $E$ be the set of endpoints of connected components of $\mathbb R \setminus \mathfrak C$.  For each $\sigma\in 2^{<\omega}$, let $n=\dom(\sigma)$ and define $$B(\sigma)=\big[\textstyle\sum_{k=0}^{n-1} \frac{2\sigma(k)}{3^{k+1}},  \textstyle\sum_{k=0}^{n-1} \frac{2\sigma(k)}{3^{k+1}}+\frac{1}{3^n}\big]\cap \mathfrak C.$$ Here, $B(\varnothing)=[0,1]\cap \mathfrak C=\mathfrak C$. The set of all $B(\sigma)$'s is the canonical clopen basis for $\mathfrak C$.

Suppose $\sigma\in 2^{<\omega}$, $Q$ is a countable dense subset of $ B(\sigma)\setminus E$, and $a$ and $b$ are real numbers with $a<b$.  Fix an enumeration $\{q_m:m<\omega\}$ for $Q$, and define a function   $$f=f_{\langle Q,\sigma,a,b\rangle}:B(\sigma)\to [a,b]$$ by the formula $$\textstyle f(c)=a+(b-a)\cdot \sum\{2^{-m-1}:m<\omega\text{ and } q_m<c\}.$$  Note that:
\begin{itemize}\renewcommand{\labelitemi}{\scalebox{.5}{{$\blacksquare$}}}
\item  $f$ is  well-defined  and non-decreasing;
\item $f\restriction B(\sigma)\setminus E$ is one-to-one;
\item   $f$ has the same value at consecutive elements of $E$;
\item  $Q$ is the set of discontinuities of $f$;  and
\item the discontinuity at $q_m$ is caused by a jump of height $(b-a)\cdot2^{-m-1}$. 
\end{itemize}
Let  $$D=\textstyle D_{\langle Q,\sigma,a,b\rangle}=f\cup \bigcup \{\{q_m\}\times [f(q_m),f(q_m)+(b-a)\cdot2^{-m-1}]:m<\omega\}.$$  Thus $D$ is equal to (the graph of) $f$ together with vertical arcs corresponding to the jumps in  $f$. Note that $\pi_1(D)=[a,b]$ and  $D$ is compact.

Let $\{Q^n_i:n,i<\omega\}$ be a collection of pairwise disjoint countable dense subsets of $\mathfrak C\setminus E$. 
As in \cite[Example 1]{lip2}, it is possible to recursively  define  a sequence $\mathcal R_0,\mathcal R_1, ...$  of finite partial tilings of $\mathfrak C\times \mathbb R$ so that for each $n<\omega$:

\begin{itemize} \item[i.]    $\mathcal R_n$ consists of rectangles $R^n_i=B(\sigma^n_i)\times [a^n_i,b^n_i]$, where $i<|\mathcal R_n|<\omega$, $\sigma^n_i\in 2^n$,   and $0<b^n_i-a^n_i\leq\frac{1}{n+1}$ for all $i<|\mathcal R_n|$;

\item[ii.] the  sets $$D^n_i=D_{\langle Q^n_i\cap B(\sigma^n_i),\sigma^n_i,a^n_i,b^n_i\rangle}$$ are such that $D^n_i\cap D^k_j=\varnothing$ whenever $k< n$ or $i\neq j$;

\item[iii.] for every arc  $I\subset \mathfrak C\times[-n,n+1]\setminus \bigcup \{D^k_i:k\leq n\text{ and }i<|\mathcal R_k|\}$    there are  integers $i<|\mathcal R_n|$,  $k\leq n$, and $j<|\mathcal R_{k}|$ such that $I\subset R^n_{i}\cup R^{k}_{j}$ and $d(I,D^k_j)\leq\frac{1}{3^n}$, where $d$ is the standard metric on $\mathbb R^2$.
\end{itemize}
Let $M^n_i$ be the (discrete) set of midpoints of the vertical arcs in $D^n_i$. The key difference between the sets $M^n_i$ and  the $T^n_i(M)$ defined in  \cite[Example 1]{lip2} is that here we have guaranteed $\pi_0(M^n_i)\cap \pi_0(M^k_j)\subset Q^n_i\cap Q^k_j=\varnothing$ whenever $n\neq k$ or $i\neq j$, whereas a vertical line could intersect multiple $T^n_i(M)$'s.  

Let $\{D_n:n<\omega\}$ and $\{M_n:n<\omega\}$ be the sets of all $D^n_i$'s and $M^n_i$'s, respectively. Properties (i) through (iii) guarantee the set $Z=\mathfrak C\times \mathbb R \setminus\bigcup \{D_n\setminus M_n:n<\omega\}$ is rim-discrete; see  \cite[Claims 1 and 3]{lip2}. Essentially, $\tau$ will be a subset of $Z$ containing all $M_n$'s, but will be vertically compressed from $\mathfrak C\times \mathbb R$ into $\mathfrak C\times(0,1)$.

\subsection{Construction of  $\overline g$}We now construct a connected function $\overline g$  on which $\tau$ will be based. Let $\xi:\mathbb R \to (0,1)$ be a homeomorphism, e.g. $\xi=\frac{1}{2}+ \frac{1}{\pi}\arctan$.  Let $\phi:[0,1]\to[0,1]$ be the Cantor function \cite{dog}, and put $\textstyle\Phi=\phi\times \xi.$ Then each $\Phi(D_n)$ is an arc which  resembles the  graph of $\phi$ reflected across the diagonal $x=y$.  See Figure 3.
\begin{figure}[h]
        \centering
     \includegraphics[scale=0.34]{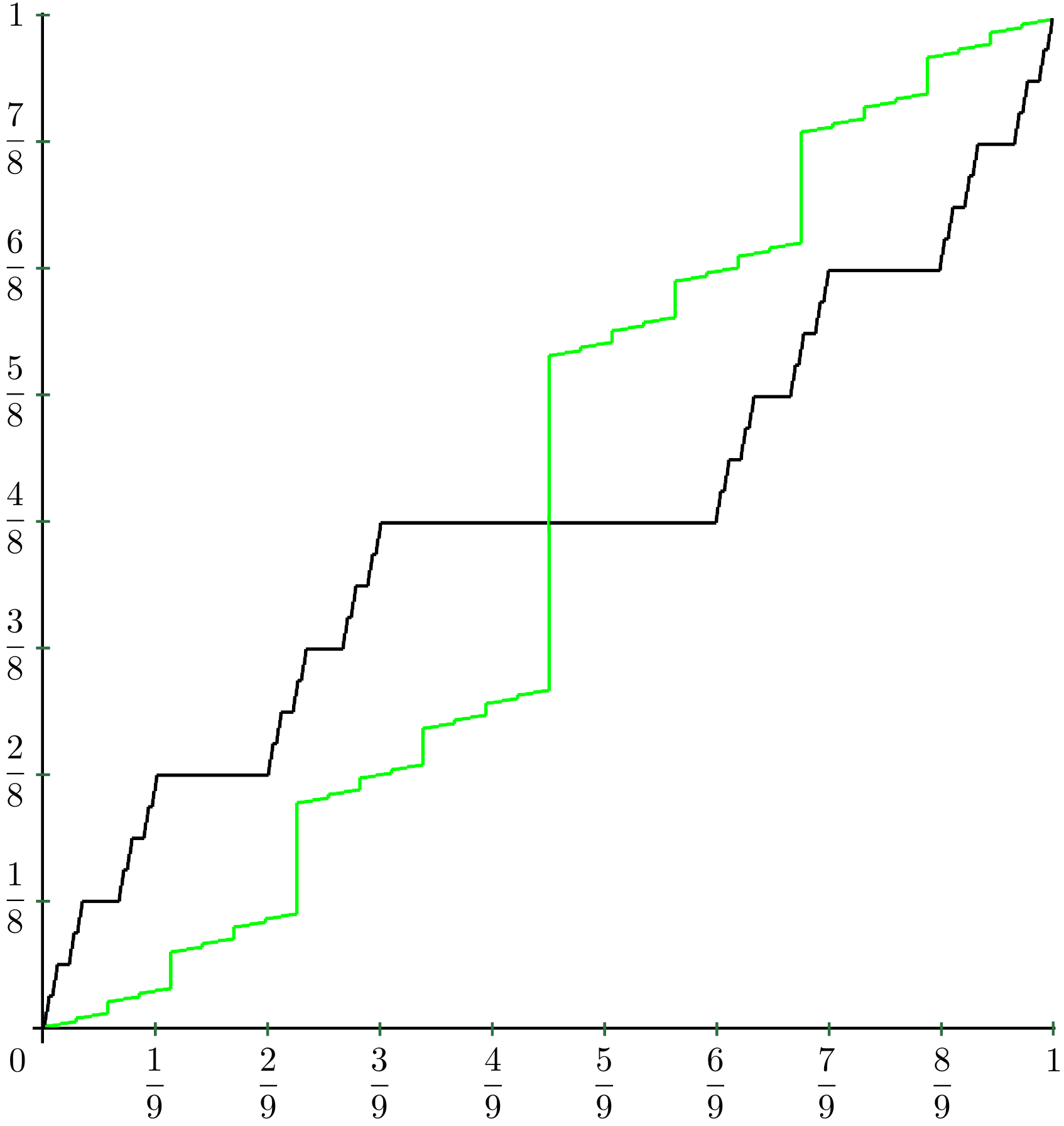}
        \caption{Graph of $\phi$ (black) and its ``inverse'' (green)}
\end{figure}

Note that $\phi(E)$ is the set of dyadic rationals in $[0,1]$. Let $$g=\textstyle(\phi(E)\times\{0\})\cup \bigcup\{\Phi(M_n):n<\omega\}.$$   Since $\pi_0\restriction M_n$ is one-to-one and the $\pi_0(M_n)$'s are pairwise disjoint,  $g$ is a function.  Also, $$\dom(g)=\textstyle\phi(E)\cup \bigcup \{\pi_0(\Phi(M_n)):n<\omega\}$$ is countable and $\ran(g)\subset [0,1)$. Our goal is to extend $g$ to a connected function $\overline g:[0,1]\to (-1,1)$. This will be accomplished with the help of two claims. By a \textit{continuum} we shall mean a compact connected metrizable space with more than one point.  

\begin{ucl} Fix $n<\omega$ and put $D=D_n$ and $M=M_n$.  Let $A\subset [0,1]$ have a dense complement and let $K\subset \Phi(D)\cup (A\times(-1,1))$  be a continuum. If $|\pi_0(K)|>1$ then $K\cap \Phi(M)\neq\varnothing$. \end{ucl}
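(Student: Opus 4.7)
My plan is to argue by contradiction, assuming $K\cap \Phi(M)=\varnothing$ and $\pi_0(K)=[c,d]$ is non-degenerate, and then producing a clopen separation of $K$ via a carefully chosen midpoint $p_m\notin K$.

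First I would establish that $\Phi(D)$ is an arc, with $\pi_1|\Phi(D)\colon \Phi(D)\to \xi([a,b])$ a homeomorphism. Density of $Q$ in $B(\sigma)$ forces $f$ to be strictly increasing (between any two points of $B(\sigma)$ sits some $q_m$, yielding a strict jump), and the vertical arcs of $D$ bridge all of its jumps, so $\pi_1\colon D\to [a,b]$ is a continuous bijection of compacta. Each $V_m=\Phi(\{q_m\}\times [f(q_m),f(q_m)+h_m])$ is a sub-arc of $\Phi(D)$ and $p_m$ is its interior midpoint.

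Since $\phi(Q)$ is dense in $\phi(B(\sigma))\supseteq [c,d]$, I would pick $m$ with $x_m:=\phi(q_m)\in (c,d)$ and write $V_m=\{x_m\}\times [b_m,t_m]$, $p_m=(x_m,c_m)$. The crucial geometric input is that both $(x_m,b_m)$ and $(x_m,t_m)$ lie in $K$. To see this, observe that $[c,d]\setminus (A\cup \phi(Q)\cup \phi(E))$ is dense in $[c,d]$, being the difference of a dense set and a countable one. For each $x_k$ in this dense set the fiber $(\Phi(D)\cup A\times (-1,1))\cap (\{x_k\}\times (-1,1))$ consists of the single arc point $(x_k,g(x_k))$, where $g$ records the $y$-value of the arc; since $x_k\in \pi_0(K)$, this point must be in $K$. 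Letting $x_k\to x_m^-$ along this dense set, the arc points $(x_k,g(x_k))$ converge to $(x_m,b_m)$; closedness of $K$ gives $(x_m,b_m)\in K$, and similarly $(x_m,t_m)\in K$.

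Then I would attempt the clopen separation of $K$ by the "T-shape" centered at $p_m$:
$$U^\pm := \bigl(\{\pm(x-x_m)>0\}\cup \{x=x_m,\ \pm(y-c_m)>0\}\bigr)\cap \bigl(\Phi(D)\cup A\times (-1,1)\bigr).$$
These sets are disjoint and $U^-\cup U^+\cup \{p_m\}$ is the entire ambient space; since $p_m\notin K$ we have $K=(U^-\cap K)\sqcup (U^+\cap K)$, with $(x_m,b_m)\in U^-\cap K$ and $(x_m,t_m)\in U^+\cap K$, so both pieces are non-empty.

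The hard part will be showing that both $U^\pm\cap K$ are closed in $K$, which would give the desired contradiction with the connectedness of $K$. By the monotonicity of the arc, an arc-point $(x_k,g(x_k))$ with $x_k<x_m$ has $y$-coordinate at most $b_m<c_m$, so a sequence in $U^-\cap K$ whose limit lies in $U^+$ can only consist of $A$-line points $(x_k,y_k)$ with $x_k\in A$, $x_k\to x_m^-$, and $y_k\to y_*>c_m$, with $(x_m,y_*)\in K\cap V_m^+$. Excluding this accumulation is the essential technical step, and I expect it is where the "dense complement" hypothesis on $A$ is used most delicately; the natural approach is to iterate — the forced limit $(x_m,y_*)\in K\cap V_m^+$ lets one apply the same separation at a nearby midpoint chosen from the countable family $\{c_{m'}\}$, ultimately producing an incompatible accumulation structure for a compact connected $K\subset \Phi(D)\cup A\times (-1,1)$.
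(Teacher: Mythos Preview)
Your setup and the choice of a midpoint $p_m=(x_m,c_m)\in\Phi(M)$ with $x_m$ in the interior of $\pi_0(K)$ are fine, but the separation you attempt has a real gap. The hypothesis says only that $[0,1]\setminus A$ is dense; $A$ itself may also be dense, so $x_m$ can be a two-sided accumulation point of $A$. In that case neither $U^-$ nor $U^+$ is closed (or open) in $\Phi(D)\cup(A\times(-1,1))$: given any point $(x_m,y)$ with $y<c_m$ on the lower half of $V_m$, the sequence $(x_k,y)$ with $x_k\in A$, $x_k\searrow x_m$ lies in $U^+$ and converges to it, and symmetrically from the left. Your proposed iteration does not help, because the very same obstruction recurs at every midpoint $p_{m'}$ whose abscissa lies in $\overline A$; there is no well-founded descent and no base case. (Your step establishing $(x_m,b_m),(x_m,t_m)\in K$ is correct but turns out to be unnecessary.)

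The missing idea---and this is exactly what the paper does---is to move the vertical cuts \emph{off} the line $x=x_m$, using the dense complement of $A$. Assuming $p_m\notin K$, note that the horizontal line $y=c_m$ meets $\Phi(D)$ only at $p_m$ (by your own observation that $\pi_1\restriction\Phi(D)$ is a homeomorphism); since $K$ is compact and misses $p_m$, one can pick $c',d'\in[0,1]\setminus A$ with $c<c'<x_m<d'<d$ so that the segment $[c',d']\times\{c_m\}$ is disjoint from $K$. The Z-shaped set
\[
S=\bigl([c',d']\times\{c_m\}\bigr)\cup\bigl(\{c'\}\times(c_m,1)\bigr)\cup\bigl(\{d'\}\times(-1,c_m)\bigr)
\]
then misses the ambient space entirely: the two vertical rays avoid $A\times(-1,1)$ because $c',d'\notin A$, and they avoid $\Phi(D)$ because by monotonicity of the arc the fiber of $\Phi(D)$ over $c'$ lies at height $\le b_m<c_m$ while that over $d'$ lies at height $\ge t_m>c_m$. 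Thus $S$ is a closed planar separator disjoint from $K$ with points of $K$ on both sides (since $\pi_0(K)=[c,d]$), contradicting connectedness. Shifting the vertical cuts to points of $[0,1]\setminus A$ is precisely where the ``dense complement'' hypothesis does its work.
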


\begin{proof}Let $a$ and $b$ be two points in $K$ such that $\pi_0(a)<\pi_0(b)$.  Since $\pi_0(K)$ is an interval contained in the union of the zero-dimensional set $A$ and the interval $\pi_0(D)$
we have $\pi_0(K)\subset \pi_0(D)$.  Noting that $\pi_0(M)$ is dense in $\pi_0(D)$ we find  a $p\in\Phi(M)$ such that $\pi_0(p)\in (\pi_0(a),\pi_0(b))$. If $p\notin K$, then we can find $c, d \in [0,1]\setminus A$ such that  $U=[c,d]\times\{\pi_1(p)\}$ is disjoint from $K$ and $\pi_0(a)<c<\pi_0(p)<d<\pi_0(b)$.
Then $U\cup(\{c\}\times (\pi_1(p),1))\cup(\{d\}\times (-1,\pi_1(p))$ separates $K$ with $a$ and $b$ on opposite sides. This contradicts our assumption that $K$ is connected.  Therefore $p\in K$.\end{proof}

\begin{ucl} Let $A\subset [0,1]$ be any countable set, and let $K\subset \bigcup \{\Phi(D_n):n<\omega\}\cup (A\times(-1,1))$  be a continuum.  If $|\pi_0(K)|>1$ then $K\cap \Phi(M_n)\neq\varnothing$ for some $n<\omega$.\end{ucl}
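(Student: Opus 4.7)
The plan is to deduce the claim from the preceding claim (which treats the single-$n$ case) by reducing to that setting. Arguing by contradiction, suppose $K\cap\Phi(M_n)=\varnothing$ for every $n<\omega$. I would exhibit an index $n_0$ and a non-degenerate sub-continuum $K'\subseteq K$ with $K'\subseteq\Phi(D_{n_0})\cup(A'\times(-1,1))$ for some countable $A'$ (hence with dense complement); applying the preceding claim to $K'$ then forces $K'\cap\Phi(M_{n_0})\ne\varnothing$, the desired contradiction.

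Since $K$ is a continuum with $|\pi_0(K)|>1$, $I:=\pi_0(K)$ is a non-degenerate closed subinterval of $[0,1]$. Writing $J_n:=\pi_0(\Phi(D_n))=\phi(B(\sigma^n_i))$, a closed dyadic subinterval of $[0,1]$, the hypothesis yields $I\subseteq A\cup\bigcup_{n<\omega}J_n$. By the Baire category theorem some $J_{n_0}$ has nonempty interior in $I$, giving a non-degenerate subinterval $(\gamma,\delta)\subseteq J_{n_0}\cap I$; among such indices, pick one with $|J_{n_0}|$ minimal (only finitely many dyadic intervals in the family can contain a fixed non-degenerate interval). For $n\ne n_0$, $J_n$ falls in one of three classes: finitely many ancestors $J_n\supsetneq J_{n_0}$; strict descendants $J_n\subsetneq J_{n_0}$, which by minimality meet $(\gamma,\delta)$ in at most one point; or intervals disjoint from $J_{n_0}$. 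Passing to a subinterval $I'\subseteq(\gamma,\delta)$ that avoids the countably many descendant-singletons, one obtains $K\cap\pi_0^{-1}(I')\subseteq\bigcup_{i=0}^{r}\Phi(D_{n_i})\cup(A'\times(-1,1))$ for a countable $A'\supseteq A$ and a finite ancestor chain $J_{n_0}\subsetneq J_{n_1}\subsetneq\cdots\subsetneq J_{n_r}$. Using the classical sub-continuum extraction principle (a continuum projecting onto an interval admits a sub-continuum projecting onto any closed subinterval), I would extract a non-degenerate sub-continuum $K'\subseteq K$ with $\pi_0(K')\subseteq I'$.

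Finally I would eliminate the finitely many ancestor arcs $\Phi(D_{n_i})$ ($1\le i\le r$) by adapting the cross-separator argument from the preceding claim. The key observation is that the sets $Q^n_i$ are pairwise disjoint (property (ii) of the construction); hence at any $q^{n_0}_m\in Q^{n_0}_{i_0}$ each ancestor function $f_{n_i}$ is continuous, and the ancestor graph heights $\xi\circ f_{n_i}\circ\phi^{-1}$ vary continuously in a neighborhood of $\pi_0(p)=\phi(q^{n_0}_m)$ for any $p\in\Phi(M_{n_0})$ centered over $q^{n_0}_m$. I would choose $m$ so that the midpoint height $\pi_1(p)$ lies strictly above all $r$ ancestor heights $\xi(f_{n_i}(q^{n_0}_m))$. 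Then for $c,d$ sufficiently close to $\pi_0(p)$ and chosen outside the countable exceptional set $A'\cup\phi(E)\cup\{\phi(q^n_m):n,m<\omega\}$, the vertical legs of the cross avoid all ancestor graphs as well as $\Phi(D_{n_0})$ itself (by the argument used in the preceding claim), while the horizontal segment at height $\pi_1(p)$ is disjoint from $K'$ by closedness of $K'$ and $p\notin K'$. This cross separates $K'$, contradicting connectedness.

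The principal obstacle is this final step: arranging the midpoint height $\pi_1(p)$ to lie strictly above all $r$ ancestor heights simultaneously. Carrying this out requires a quantitative argument exploiting the structure of the rectangles $R^n_i$ from the construction and the density of $\Phi(M_{n_0})$-midpoint heights near the top of the range $(\xi(a^{n_0}_{i_0}),\xi(b^{n_0}_{i_0}))$, possibly combined with a further refinement of the subinterval $I'$ restricting attention to those $q^{n_0}_m$ where the ancestor heights happen to be small.
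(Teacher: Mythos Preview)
Your reduction breaks down at the step where you assert that strict descendants $J_n\subsetneq J_{n_0}$ meet $(\gamma,\delta)$ in at most one point. Minimality of $|J_{n_0}|$ among dyadic intervals \emph{containing} $(\gamma,\delta)$ only says that no proper descendant contains $(\gamma,\delta)$; it does not prevent a descendant from being a nondegenerate subinterval of $(\gamma,\delta)$. Since the family $\{J_n:n<\omega\}$ may include infinitely many dyadic subintervals of $(\gamma,\delta)$ of arbitrarily small length, there is in general no subinterval $I'\subset(\gamma,\delta)$ over which only finitely many $\Phi(D_n)$ contribute. The projection-based reduction to a finite ancestor chain therefore fails, and with it the remainder of the argument.

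The paper sidesteps this by working not with $\pi_0$ but with the monotone quotient $q:K\to K'$ that collapses each connected component of each vertical slice $K_x$ to a point. Baire category is applied to the countable closed covering of the continuum $K'$ by the compacta $q(K_x)$ for $x\in A$ and $q(\Phi(D_n)\cap K)$ for $n<\omega$, yielding a nondegenerate subcontinuum $C'\subset q(\Phi(D_n)\cap K)$ for a \emph{single} index $n$. The crucial observation is that for $x\notin A$ each component of $K_x$ lies in exactly one $\Phi(D_i)$ (Sierpi\'nski's theorem applied to the disjoint closed cover $\{\Phi(D_i)\cap K_x\}$), so the sets $q(\Phi(D_i)\cap K_x)$ are pairwise disjoint and the preimage $C=q^{-1}(C')$ already satisfies $C\subset\Phi(D_n)\cup(A\times(-1,1))$. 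Claim~5.1 then applies directly to $C$, with no ancestors to eliminate. Your acknowledged final-step obstacle thus never arises in the paper's route, and it appears to be genuine in yours: nothing in the construction of the rectangles $R^k_j$ forces the midpoint heights of $D_{n_0}$ to lie on any prescribed side of the ancestor graphs, so the cross-separator cannot in general be threaded past several of them at once.
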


\begin{proof}For each $x\in[0,1]$, let $K_x=K\cap(\{x\}\times(-1,1))$. Let $\mathcal K$ be the  decomposition of $K$ consisting of every connected component of every non-empty $K_x$. Applying \cite[Lemma 6.2.21]{eng} to the perfect map $\pi_0\restriction K$, we see that $\mathcal K$ is upper semi-continuous. If $q:K\to K'$ is the associated (closed) quotient mapping then $K'$ is also a continuum.  Consider the countable covering $\mathcal V$ of $ K'$ consisting of the compacta $q(K_x)$ for $x\in A$ and $q(\Phi(D_n)\cap K)$  for  all $n<\omega$. By the Baire Category Theorem there is an element of $\mathcal V$ that has nonempty interior in $ K'$ and hence contains a (non-degenerate) continuum $ C'$  by \cite[Theorem 6.1.25]{eng}.  Each $q(K_x)$ is zero-dimensional by \cite[Theorem 6.2.24]{eng}, so $ C'\subset q(\Phi(D_n)\cap K)$ for some $n<\omega$. Since $q$ is a closed monotone map, the pre-image $C=q^{-1}( C')$ is a continuum  by \cite[Theorem 6.1.29]{eng}. Note that $|\pi_0(C)|>1$ because otherwise $C'$ would be a subset of some zero-dimensional $q(K_x)$.  If   $x\notin A$ then  each connected component of $K_x$ is contained in a single $\Phi(D_i)$ by the Sierpi\'nski Theorem \cite[Theorem 6.1.27]{eng},
because the $\Phi(D_i)$'s are disjoint. Thus $q(\Phi(D_n)\cap K_x)$ is disjoint from $q(\Phi(D_i)\cap K_x)$ for each $i\not=n$. So $C\subset (A\times(-1,1))\cup\Phi(D_n)$.  By Claim 5.1 we have that $C\cap \Phi(M_n)\neq\varnothing$.\end{proof}

Now let $\{x_\alpha:\alpha<\mathfrak c\}$ enumerate the set $[0,1]\setminus \dom(g)$.  Let $\{K_\gamma:\gamma<\mathfrak c\}$ be the set of continua in $[0,1]\times (-1,1)$ such that:
\begin{itemize}\renewcommand{\labelitemi}{\scalebox{.5}{{$\blacksquare$}}}
\item $K_\gamma$ is not contained in any vertical line; and 
\item $K_\gamma \cap \Phi(M_n)= \varnothing$ for all $n<\omega$.
\end{itemize}
For each $\alpha<\mathfrak c$ let $l_{\alpha}=(\{x_\alpha\}\times(-1,1))\setminus\bigcup \{\Phi(D_n):n<\omega\}$.  By transfinite induction we  define for each $\alpha<\mathfrak c$ an ordinal $$\gamma(\alpha)=\min\{\gamma<\mathfrak c:l_\alpha\cap K_\gamma\neq\varnothing \text{ and } \gamma\neq \gamma(\beta)\text{ for any }\beta<\alpha\}.$$ 
We verify that the one-to one function $\gamma:\mathfrak c\to \mathfrak c$ is well-defined. Let $\alpha<\mathfrak c$ so $x_\alpha\notin \dom(g)$ and $x_\alpha\notin \pi_0(\Phi(M_n))$ for each $n$. Since $M_n$ contains the midpoints of 
all vertical intervals in $D_n$ we have that $\{x_\alpha\}\times(-1,1)$ contains at most one point of $\Phi(D_n)$. Let $A$ be the countable set 
$$\bigcup_{n<\omega}\pi_1((\{x_\alpha\}\times(-1,1))\cap\Phi(D_n))\cup\Phi(M_n)).$$
If $a\in(-1,1)\setminus A$ then $K=[0,1]\times\{a\}$ misses every $\Phi(M_n)$ so $K=K_\beta$ for some $\beta<\mathfrak c$. Also we have $l_\alpha\cap K_\beta\not=\varnothing$. 
Since $|(-1,1)\setminus A|=\mathfrak c$ we have that $\gamma$ is well-defined.

For every $\alpha<\mathfrak c$ choose a $y_\alpha\in \pi_1( l_\alpha\cap K_{\gamma(\alpha)})$. Define $$\overline g=g\cup \{\langle x_\alpha,y_\alpha\rangle:\alpha<\mathfrak c\}$$ and note that $\overline g:[0,1]\to (-1,1)$ is a function.  To prove that the graph of $\overline g$ is connected let $K$ be a continuum in $[0,1]\times(-1,1)$ such that $|\pi_0(K)|>1$. We show that $K\cap\overline g\not=\varnothing$.
The set $K$ intersects some $\Phi(M_n)$, which is a subset of $\overline g$, or $K=K_\alpha$ for some $\alpha<\mathfrak c$.
 By the contraposition of Claim 5.2, the projection $A=\pi_0(K_\alpha\setminus \bigcup \{\Phi(D_n):n<\omega\})$ is uncountable.  $A$ is a continuous image of  a Polish space, so in fact  it has cardinality $\mathfrak c$ by \cite[Corollary 11.20]{jech}. Since  $[0,1]\setminus \{x_\beta:\beta<\mathfrak c\}=\dom(g)$ is countable, this means  $B=\{\beta<\mathfrak c:l_\beta\cap K_\alpha\neq\varnothing\}$ has cardinality $\mathfrak c$. Assuming that $K_\alpha\cap\overline g=\varnothing$ we find that $\alpha$ cannot be in the range of $\overline g$. If $\beta\in B$ then $l_\beta\cap K_\alpha\not=\varnothing$
 so by the definition of $\gamma$  we have $\gamma(\beta)<\alpha$. Thus $\gamma\restriction B$ is a one-to-one function from $B$ into $\{\delta:\delta<\alpha\}$ and we have the desired contradiction.   So (the graph of) $\overline g$  intersects each continuum in $[0,1]\times (-1,1)$ not lying wholly in a vertical line.  By  \cite[Theorem 2]{jon},  $\overline g$ is connected.
 

\subsection{Definition and properties of $\tau$}Observe that $\overline g\circ \phi\subset (\mathfrak C\times(-1,1))\cup([0,1]\times\{0\})$. Let $$\textstyle\tau=(\overline g\circ \phi)\cap (0,1)^2.$$  The domain of $\tau$ is  the set $P=\pi_0(\tau)\subset \mathfrak C$. 

Let $X=\nabla (\overline g \cap ((0,1)\times[0,1)))$.  If $A$ is any clopen subset of  $X$ with $\langle \frac{1}{2},0\rangle\in A$, then $A=X$.  Otherwise, $\nabla^{-1}(X\setminus A)$ would be a non-empty proper clopen subset of $\overline g$, contrary to the fact that $\overline g$ is connected. Therefore $X$ is connected. Note that $\textstyle \textbf{\d{$\nabla$}}\tau\simeq X$, so  $\textstyle \textbf{\d{$\nabla$}}\tau$ is also connected. Finally, let  $\textstyle\Xi=\id_{\mathfrak C}\times \xi.$ By \cite[Claims 3 and 4]{lip2} and the construction of $Z$, $\textbf{\d{$\nabla$}}\Xi(Z)$ is rim-discrete.   We have $\textbf{\d{$\nabla$}}\tau\subset \textbf{\d{$\nabla$}}\Xi(Z)$, so $\textbf{\d{$\nabla$}}\tau$ is rim-discrete.

\subsection{Two questions.}
A continuum is \textit{Suslinian} if it contains no  uncountable collection of pairwise disjoint (non-degenerate)  subcontinua \cite{lel2}. The class of Suslinian continua is slightly larger than the class of rational continua.

\begin{uq}Can $\mathfrak E_{\mathrm c}$ be embedded into a Suslinian continuum? 
\end{uq}

\begin{uq}Can $\mathfrak E_{\mathrm c}$ be densely embedded into the plane $\mathbb R ^2$?  
\end{uq}




\end{document}